\newtheorem{theorem}{Theorem}[section]
\newtheorem{definition}[theorem]{Definition}
\newtheorem{lemma}[theorem]{Lemma}
\title{The spectral radii and extremal graphs of two types of minimal graphs}
\author{Liwen Lian, {Jinfeng Liu, Mengyuan Niu, Xiumei Wang\thanks{Corresponding author: Xiumei Wang. e-mail: wangxiumei@zzu.edu.cn.}}\\
{\small  School of Mathematics and Statistics, Zhengzhou University}\\
{\small Zhengzhou 450001, China}\\
}
\date{}\makeatother
\begin{document}
\maketitle
\begin{center}
\vskip 0.6cm
\end{center}

\begin{abstract}
A connected nontrivial graph $G$ is {\it matching covered} if every edge of $G$ is contained in some perfect matching of $G$. A matching covered graph $G$ is {\it minimal} if $G-e$ is not matching covered for each edge $e$ of $G$. A graph is said to be {\it factor-critical} if $G-v$ has a perfect matching for every $v\in V(G)$. A factor-critical graph $G$ is said to be {\it minimal factor-critical} if $G-e$ is not factor-critical graph for each edge $e\in E(G)$. In this paper, by employing ear decomposition and edge-exchange techniques, the greatest spectral radii of minimal matching covered bipartite graphs and minimal factor-critical graphs are determined, and the corresponding extremal graphs are characterized.
\end{abstract}

{\bf Keywords} spectral radius; ear decomposition; matching covered graph; factor-critical graph;\\

{\bf 2000 MR Subject Classification} \ 05C50, 05C70, 05C75

\section{Introduction}

Graphs considered in this paper are connected and simple. For standard graph-theoretical notation and terminology, we refer the readers to \cite{ref100} and \cite{ref17}. Let $G$ be a graph with vertex set $V(G)$ and edge set $E(G)$. The {\it order} of $G$ is denoted by $n$, i.e. $n=|V(G)|$. We write $K_{n}$ for the {\it complete graph} of order $n$. The {\it complement} $\overline{G}$ of $G$ is the simple graph whose vertex set is $V(G)$ and whose edges are the pairs of nonadjacent vertices of $G$. The {\it union} of graphs $G$ and $H$ is the graph $G\cup H$ with vertex set $V(G)\cup V(H)$ and edge set $E(G)\cup E(H)$. If $G$ and $H$ are edge-disjoint, $G\cup H$ can be denoted by $G+H$. The {\it join} $G\vee H$~is the graph obtained from $G\cup H$ by adding edges joining every vertex of $G$ to every vertex of $H$. We write $P_{n}$ and $C_{n}$ for the path and cycle of order $n$, respectively.

 A {\it matching} of $G$ is a set of pairwise nonadjacent edges. A {\it perfect matching} of $G$ is a matching which covers all vertices of $G$. A connected nontrivial graph $G$ is {\it matching covered} if every edge of $G$ is contained in some perfect matching of $G$. A matching covered graph $G$ is {\it minimal} if $G-e$ is not matching covered for each edge $e$ of $G$. Lov\'{a}sz and Plummer~\cite{ref16} gave an upper bound on the number of edges of a minimal matching covered bipartite graph and characterized extremal graphs. Zhang et al.~\cite{ref8} presented a complete characterization of minimal matching covered graphs which are claw-free. He et al.~\cite{ref9} proved that the minimum 
degree of a minimal matching covered graph other than $K_{2}$ is either 2 or 3. Mallik et al.~\cite{ref10} characterized minimal matching covered bipartite graphs with the number of vertices of degree 2 is $2(m-n+2)$.
 
A graph is said to be {\it factor-critical} if $G-v$ has a perfect matching for every $v\in V(G)$. A factor-critical graph $G$ is said to be {\it minimal} if $G-e$ is not factor-critical graph for each edge $e\in E(G)$. Favaron~\cite{ref11} and Yu~\cite{ref12} independently defined $k$-factor-critical graphs for any positive integer $k$. A graph is said to be {\it$k$-factor-critical} if the removal of any set of $k$ vertices results in a graph with a perfect matching. Zhang and Fan~\cite{ref13} established lower bounds on the number of edges of $G$ and the spectral radius of $G$, respectively, to guarantee that $G$ is $k$-factor-critical.  

Let $A(G)$ be the {\it adjacency matrix} of a graph $G$ and $\rho(G)$ be its {\it spectral radius}, which is the largest eigenvalue of $A(G)$. By Perron-Frobenius Theorem~\cite{ref32}, every connected graph $G$ of order $n$ has a positive unit eigenvector $\mathbf{x}=(x_{1}, x_{2}, \ldots, x_{n})^{T}$ corresponding to $\rho(G)$, which is called the {\it Perron vector} of $G$.
 
One of the most well-known problems on spectral graph theory is the following Brualdi-Solheid problem~\cite{ref26}.

\indent\noindent\textbf{Problem 1 (Brualdi-Solheid problem):} Given a set $\mathcal{G}$ of graphs, find a tight upper bound for the spectral radius of graphs in $\mathcal{G}$ and characterize the extremal graphs.

The problem is well studied in the literature for many classes of graphs, such as graphs with cut vertices~\cite{ref2}, cut edges~\cite{ref7}, given chromatic number~\cite{ref3}, diameter~\cite{ref5}, domination number~\cite{ref6}, and independence number~\cite{ref28, ref27}. Feng et al.~\cite{ref4} characterized the extremal graphs with maximum spectral radius among simple graphs of order $n$ and matching number $\beta$. Zhang et al.~\cite{ref30,ref31} characterized these graphs among $t$-connected graphs with matching number at most $\frac{n-k}{2}$ and among connected graphs with minimum degree $\delta$ and matching number at most $\frac{n-k}{2}$, where $t,k$ are two positive integers. In this paper, by employing ear decomposition and edge-exchange techniques, the greatest spectral radius of minimal matching covered bipartite graphs and minimal factor-critical graphs are determined, and the extremal graphs are characterized. The main results are as follows.
\begin{theorem}\label{t1} 
Let~$G$~be a minimal matching covered bipartite graph of order $n$. If~$n\leq4$, then $\rho(G)\leq2$, the equality holds if and only if~$G\cong C_{4}$$\mathrm{;}$ If~$n\geq 6$, then $\rho(G)\leq\frac{1+\sqrt{2n-3}}{2}$, the equality holds if and only if~$G\cong P^{*}_{3}$, where $P^{*}_{3}$~is the union of~$\frac{n-2}{2}$~paths of length of 3 with the same ends (see Figure 1).
\end{theorem}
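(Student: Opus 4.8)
The plan is to take a minimal matching covered bipartite graph $G$ of order $n\ge 6$ with $\rho(G)$ as large as possible, fix a Perron vector $\mathbf{x}=(x_{1},\ldots,x_{n})^{T}$, and apply a succession of local edge-exchange operations, each preserving the property of being minimal matching covered and bipartite and each not decreasing $\rho$, until $G$ is forced to coincide with $P^{*}_{3}$. The engine for the spectral comparisons is the Rayleigh inequality: for any graph $G'$ on $V(G)$ one has $\rho(G')\ge \mathbf{x}^{T}A(G')\mathbf{x}$, with strict inequality unless $\mathbf{x}$ is also a Perron vector of $G'$. In particular, if $p,q$ lie in the same part of the bipartition with $x_{p}\ge x_{q}$ and $z$ is a neighbour of $q$ (hence in the other part) that is not a neighbour of $p$, then deleting $qz$ and inserting $pz$ keeps the graph bipartite and cannot decrease $\rho$, and increases it strictly at a generic configuration; similar ``remove one edge, add two edges'' moves will also be used.

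Before optimizing I would collect the structural preliminaries supplied by matching theory. Since $G$ is matching covered with more than one edge, $\delta(G)\ge 2$. Since $G$ is minimal it has no removable edge, so in any bipartite ear decomposition $G=K_{2}+R_{1}+\cdots+R_{r}$ every ear has length at least $3$, because an ear of length $1$ appended last would be a removable edge. Combined with the Lov\'{a}sz--Plummer analysis of minimal matching covered bipartite graphs in \cite{ref16}, this forces strong sparseness, and in particular limits the vertices of degree $\ge 3$. I then fix $u$ in one part and $v$ in the other maximizing $x_{u}$ and $x_{v}$ respectively; these are the intended hubs of $P^{*}_{3}$.

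The heart of the argument is then the following chain of claims, each spectral step justified by the Rayleigh inequality and each structural conclusion forced by non-strictness at the optimum. (i) Every vertex other than $u$ and $v$ has degree exactly $2$: if some $w\ne u,v$, say in the part of $v$, has degree $\ge 3$, an edge at $w$ can be re-routed to $v$ while staying minimal matching covered, contradicting maximality; hence $G$ is a generalized theta graph, obtained from a multigraph on $\{u,v\}$ by replacing each edge with a $u$--$v$ path of odd length. (ii) No such path has length $1$: a single edge $uv$ is removable, since deleting it leaves the remaining theta graph, which is still matching covered, contradicting minimality. (iii) Every $u$--$v$ path has length exactly $3$: if some path $u-p-q-r-s-\cdots-v$ has length $\ge 5$, replacing the sub-edge $qr$ by the two edges $qv$ and $ur$ splits it into a shorter path and a new length-$3$ path, with Rayleigh increment $2(x_{q}x_{v}+x_{u}x_{r}-x_{q}x_{r})\ge 0$ (since $x_{u},x_{v}$ are maximal in their parts), keeps $G$ minimal matching covered, and strictly increases $\rho$. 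Together (i)--(iii) give $G\cong P^{*}_{3}$. Finally, using the $u\leftrightarrow v$ symmetry of the eigenvalue equations for $P^{*}_{3}$ one obtains $\rho^{2}-\rho-\tfrac{n-2}{2}=0$, hence $\rho(P^{*}_{3})=\tfrac{1+\sqrt{2n-3}}{2}$; and the cases $n\in\{2,4\}$ are settled by inspection, where the only minimal matching covered bipartite graphs are $K_{2}$ and $C_{4}$, with $\rho\le 2$ and equality exactly for $C_{4}$.

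The step I expect to be the real obstacle is verifying, at each edge-exchange, that the resulting graph is still \emph{minimal} matching covered. Matching-coveredness is comparatively robust and can be tracked through the ear decomposition or through the Hall-type condition on the bipartition; but ``minimality'', being the absence of any removable edge, is a global condition that an ill-chosen exchange can easily destroy, for instance by creating a near-pendant configuration or a new removable edge. Making the exchanges precise so that both halves of the property survive, via a case analysis organized around the ear decomposition, will be the combinatorial core of the proof.
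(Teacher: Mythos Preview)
Your overall architecture agrees with the paper's: fix an extremal $G$, use Perron-guided edge exchanges to force $G$ into the shape of $P^{*}_{3}$, and compute $\rho(P^{*}_{3})$ at the end. Claims~(ii) and~(iii) are essentially correct once $G$ is known to be a generalized theta graph: a theta graph with all $(u,v)$-paths of odd length $\ge 3$ has no chord on any nice cycle, so Lemma~\ref{m5} gives minimality immediately, and your splitting move in~(iii) does strictly raise $\rho$. The final eigenvalue computation is Lemma~\ref{mb}.

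The real gap is Claim~(i), and here your plan diverges from the paper in a way that is not yet a proof. Asserting that ``an edge at $w$ can be re-routed to $v$ while staying minimal matching covered'' is exactly the hard step, and an arbitrary swap $wz\mapsto vz$ can easily create a chord on a nice cycle (violating Lemma~\ref{m5}) or destroy matching-coveredness altogether; you have not specified which edge to move, nor why minimality survives. The paper does \emph{not} proceed by directly re-routing toward the Perron-maximal pair. Instead it assigns each ear in a bipartite ear decomposition a \emph{grade}, takes an ear decomposition realizing the minimum top grade $m$, and shows in Lemmas~\ref{m12}, \ref{a}, \ref{z}, \ref{n} (organized by how many families of $m$-grade parallel ears have exactly one end on a fixed $(m{-}1)$-grade ear $P^{m-1}$) that $m\ge 2$ always permits an improving exchange; the conclusion $m=1$ is what forces $G$ to be a theta graph. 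The certificates for minimality after each exchange are Lemmas~\ref{m6} and~\ref{mx}, which impose quite specific hypotheses (the new ear is nontrivial, lands on nonadjacent vertices of a designated earlier ear, and is \emph{compatible} with all other ears already present). A second, smaller issue: when $x_{v}=x_{w}$ your single swap only yields $\rho(G')\ge\rho(G)$, not strict inequality. The paper repeatedly handles ties by compound moves that delete some edges and add strictly more (e.g.\ the extra edge $uu'$ in the $x_u<x_v$ subcase of Lemma~\ref{a}, or $+vv'$ in Claim~2 of the main proof), then invokes Lemma~\ref{m9} for the strict gain. Your plan needs the same device.
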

\begin{figure}[htbp]
  \centering
  \includegraphics[width=0.3\textwidth]{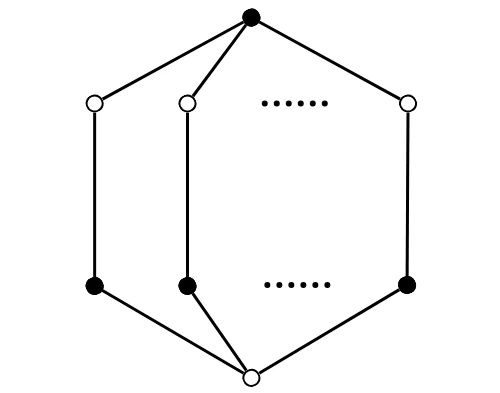}
  \caption{The graph $P^{*}_{3}$.}
\end{figure}

\begin{theorem}\label{t1}
Let~$G$~be a minimal factor-critical graph of order $n$. Then~$\rho(G)\leq \rho(K_{1}\vee \frac{n-1}{2}K_{2})$, the equality holds if and only if~$G\cong K_{1}\vee \frac{n-1}{2}K_{2}$.
\end{theorem}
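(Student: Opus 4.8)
Since $G$ is factor-critical its order $n$ is odd, so $t:=\frac{n-1}{2}$ is a nonnegative integer and the candidate extremal graph $K_1\vee tK_2$ is the windmill (friendship) graph $F_t$: a central vertex lying in $t$ triangles that pairwise meet only at that vertex. Two preliminary facts I would record first: $F_t$ is itself minimal factor-critical (deleting a triangle edge, spoke or rim, isolates a vertex once the opposite vertex of that triangle is removed), and its spectral radius, computed from the two-valued Perron vector, is the larger root of $\rho^2-\rho-2t=0$, i.e. $\rho(F_t)=\frac{1+\sqrt{4n-3}}{2}$. So it suffices to prove that every minimal factor-critical graph $G$ of order $n\ge 3$ satisfies $\rho(G)\le\frac{1+\sqrt{4n-3}}{2}$, with equality only for $F_t$.

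The plan is to distil from the minimality hypothesis, through Lov\'asz's odd ear decomposition of factor-critical graphs, two numerical invariants of $G$ and then invoke a minimum-degree spectral bound. Fix an odd ear decomposition $K_1=G_0\subset G_1\subset\cdots\subset G_k=G$, $G_i=G_{i-1}\cup P_i$. First, no $P_i$ is a single edge: if $P_i=e$ then $G_0\subset\cdots\subset G_{i-1}\subset G_{i-1}\cup P_{i+1}\subset\cdots\subset G-e$ is again a valid odd ear decomposition (each later ear has both endpoints among $V(G_{i-1})$ and the interiors of the other later ears, none of which lies in the empty interior of the single edge $P_i$), so $G-e$ is factor-critical, contradicting minimality. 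Hence each ear has length at least $3$, and from $|E(G)|=\sum\ell_i=(n-1)+k$ together with $2k\le\sum(\ell_i-1)=n-1$ we get $|E(G)|\le\frac{3(n-1)}{2}$. Second, the interior vertices of the last ear $P_k$ — of which there are at least two — have degree exactly $2$ in $G$, and since factor-criticality forbids degree $\le 1$, we get $\delta(G)=2$. Third, a rigidity lemma I would prove alongside: if a minimal factor-critical graph has a vertex $v$ adjacent to all others, it is $F_t$; indeed a perfect matching of $G-v$ together with the edges at $v$ exhibits a spanning copy of $F_t$ in $G$, and any additional edge $e$ would make $G-e$ a spanning supergraph of a factor-critical graph, hence factor-critical — contradicting minimality.

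Now I would apply the Hong--Shu--Fang inequality $\rho(G)\le\tfrac12\!\left(\delta-1+\sqrt{(\delta+1)^2+4(2m-\delta n)}\right)$ for a connected graph with $m$ edges and minimum degree $\delta$, whose equality case is exactly: $G$ regular, or $G$ bidegreed with each vertex of degree $\delta$ or $n-1$. With $\delta=2$ and $m\le\frac{3(n-1)}{2}$ the radicand is at most $9+8\cdot\frac{3(n-1)}{2}-8n=4n-3$, so $\rho(G)\le\frac{1+\sqrt{4n-3}}{2}=\rho(F_t)$. If equality holds then both estimates must be tight: $m=\frac{3(n-1)}{2}$, and $G$ is $2$-regular or bidegreed with degrees $2$ and $n-1$. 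In the former case $G=C_n$ with $\rho(G)=2$, which equals $\rho(F_t)$ only for $n=3$ (where $C_3=K_3=F_1$); in the latter $G$ has a vertex of degree $n-1$, and the rigidity lemma gives $G\cong F_t$. This completes the proof.

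The substance of the argument lies in the two structural reductions of the second paragraph: the single place where minimality is genuinely used is the ``no single-edge ear'' step, and the only slightly delicate point there is checking that deleting that edge leaves a graph still carrying an odd ear decomposition; everything after is a standard spectral bound plus bookkeeping in the equality case. If one wished to avoid quoting the Hong--Shu--Fang bound and stay closer to a pure edge-exchange proof, the alternative is: take $G$ extremal with Perron vector $\mathbf x$ and a maximum-weight vertex $u$, and show that whenever $G\ne F_t$ some ear of length $\ge 5$, or some triangle-ear not based at $u$, can be re-routed into a triangle through $u$ so that the Rayleigh quotient at $\mathbf x$ strictly increases; the obstacle there is that minimal factor-criticality is not monotone under edge changes, so the re-routing must be engineered — using precisely the ear-decomposition structure above — never to restore the criticality of an edge it should leave critical.
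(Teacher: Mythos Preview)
Your proof is correct, but it follows a genuinely different route from the paper's. The paper argues by extremality and edge-exchange: it takes $G$ with maximum spectral radius, and using the Perron vector together with Lemmas~\ref{m9} and~\ref{m11} it shows successively that every block of $G$ is an odd cycle, then that every block is a triangle, and finally that there is a single cut vertex, forcing $G\cong K_1\vee\frac{n-1}{2}K_2$. Your argument instead extracts two global invariants from the odd ear decomposition and minimality (namely $|E(G)|\le\frac{3(n-1)}{2}$ and $\delta(G)=2$) and feeds them into the Hong--Shu--Fang inequality, reading off both the bound $\rho(G)\le\frac{1+\sqrt{4n-3}}{2}$ and, through its equality characterization together with your rigidity lemma, the uniqueness of the extremal graph. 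The trade-off is clear: the paper's proof is entirely self-contained, relying only on elementary Perron--Frobenius facts already set up in Section~2, and its edge-exchange machinery is the same one developed at length for the bipartite theorem; your proof is considerably shorter and more transparent in the equality analysis, but it imports a substantial external spectral result (and its equality case) not stated in the paper. Both approaches share the one genuinely structural ingredient---that minimality forbids single-edge ears (Lemma~\ref{m-ear})---which you prove directly and the paper cites.
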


\section{Preliminaries}
We begin with some definitions, which are used in the proof of the main results. For any subset $S$ of $V(G)$, let$~N_{G}(S)$ denote all vertices in $~V(G)\setminus S~$which are adjacent to at least one vertex in $S$. We denote by $G-S$ the subgraph of $G$ obtained from $G$ by deleting the vertices in $S$ together with their incident edges, and we denote by $G[S]$ the subgraph of $G$ induced by $S$. For a subset $E'$ of $E(G)$, $G-E'$ is the graph obtained from $G$ by deleting edges in $E'$. Similarly, $G+E'$ represents the graph obtained from $G$ by adding edges in $E'$. A path with ends $u$ and $v$~is called a ($u, v$)-{\it path}. For a path $P$ and two vertices $u$, $v$ on $P$, we denote by $P[u,v]$ the segment of $P$ with ends $u$ and $v$.

A {\it chord} of a cycle $C$ in a graph $G$ is an edge in $E(G)\setminus E(C)$ both of whose ends lie on $C$. A subgraph $H$ of $G$ is said to be {\it nice} if $G-V(H)$ has a perfect matching. In particular, if $H$ is a cycle, then $H$ is called a {\it nice cycle} of $G$. An {\it ear} of $G$ is an odd path whose ends lie in $G$ but whose internal vertices do not. An ear is {\it trivial} if it has only one edge, and {\it nontrivial} otherwise. {\it A family of parallel ears} is a set of ears that share the same ends and whose internal vertices pairwise distinct. The following gives the definition of the ear decomposition.
\begin{definition}
An ear decomposition of $G$ is a sequence of subgraphs $(G_{0}, G_{1}, G_{2}, \ldots, G_{k})$ of $G$ that satisfies the following conditions:
\begin{enumerate}[(i)]
\setlength{\itemsep}{-1ex}
\item $G_{0}$~is a subgraph of $G$,
\item $G_{i+1}=G_{i}+P_{i+1}$, where~$P_{i+1}$~is an ear of~$G_{i}$~$(0\leq i\leq k-1)$,
\item $G_{k}=G$.
\end{enumerate}

If $G$ has an ear decomposition $(G_{0}, G_{1}, G_{2}, \ldots, G_{k})$, we write $G=G_{0}+P_{1}+\cdots+P_{k}$, $G_{i+1}=G_{i}+P_{i+1} (0\leq i \leq k-1)$, and call $G_{0}+P_{1}+\cdots+P_{k}$ an ear decomposition of $G$. In particular, if $G_0$ is an odd cycle, it is called an odd ear decomposition of $G$. If $G$ is a bipartite graph, $G_0=K_2$, and $P_{i+1}$ connects vertices in different parts of $G_{i}$, then it is called a bipartite ear decomposition of $G$. At this time, $G_{1}=G_{0}+P_{1}$ is an even cycle.       
\end{definition}
\begin{definition}
Let $G$ be a matching covered bipartite graph with a bipartite ear decomposition $G=C+P_{1}+\cdots+P_{k}+\mathcal {P}_{k+1}+\mathcal{P}_{k+2}$, where $\mathcal{P}_{k+i}$ ($i=1,2$) is a family of parallel ears (it may contain only one ear), and at least one end of $\mathcal{P}_{k+i}$ lies on the ear $P_{k}$. If the ends of~$\mathcal {P}_{k+1}$ and $\mathcal {P}_{k+2}$ on $P_{k}$ appear in order (as shown in Figure 2), then we call $\mathcal {P}_{k+1}$ and $\mathcal{P}_{k+2}$ are compatible.          
\end{definition}
\begin{figure}[htbp]
  \centering
  \includegraphics[width=0.6\textwidth]{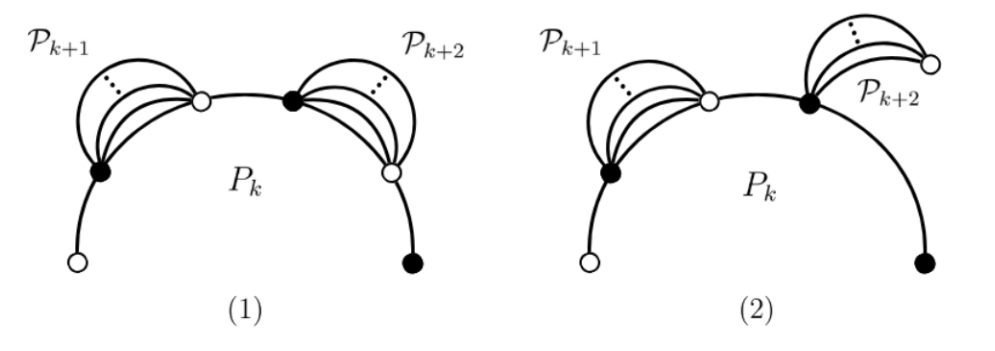}
  \caption{$P_{k}$, $\mathcal {P}_{k+1}$~and~$\mathcal {P}_{k+2}$}
  \vspace{-3mm}
\end{figure}
Next, we introduce some lemmas essential to the proof.
\begin{lemma}[\cite{ref16}]\label{m2}
A bipartite graph is matching covered if and only if it has a bipartite ear decomposition. 
\end{lemma}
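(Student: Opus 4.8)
This is the classical ear-decomposition theorem for elementary bipartite graphs (Lov\'asz--Plummer), so the plan is to give a self-contained proof of the two implications by induction, isolating the single non-trivial structural fact. Throughout, write $(X,Y)$ for the bipartition and recall that a matching covered graph is connected and has a perfect matching. For \emph{sufficiency} (a bipartite ear decomposition makes $G$ matching covered), I would induct on the number $k$ of ears. The base case $G_1=G_0+P_1$ is an even cycle, which is matching covered since its two complementary alternating perfect matchings together cover every edge. For the inductive step write $G_{i+1}=G_i+P$, where $P=a_0a_1\cdots a_{2t+1}$ is an odd ear with ends $u=a_0\in X$, $v=a_{2t+1}\in Y$ and new internal vertices $a_1,\dots,a_{2t}$. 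Every edge of $G_i$ lies in a perfect matching of $G_i$ (induction), which extends to $G_{i+1}$ by appending the internal matching $\{a_1a_2,a_3a_4,\dots,a_{2t-1}a_{2t}\}$; this same matching covers the ear edges $a_1a_2,\dots,a_{2t-1}a_{2t}$. The remaining ear edges $a_0a_1,a_2a_3,\dots,a_{2t}a_{2t+1}$ are all covered at once by the matching $\{a_0a_1,a_2a_3,\dots,a_{2t}a_{2t+1}\}$ on $P$ together with a perfect matching of $G_i-u-v$. Thus the step reduces to one claim: $G_i-u-v$ has a perfect matching.

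I would prove this claim as a separate sub-lemma: \emph{if $B$ is a matching covered bipartite graph with parts $(X,Y)$, then $B-u-v$ has a perfect matching for every $u\in X$, $v\in Y$.} The key is the elementary-graph characterization: a connected bipartite $B$ with $|X|=|Y|$ and an edge is matching covered if and only if $|N_B(S)|\ge |S|+1$ for every nonempty proper $S\subsetneq X$. One direction is Hall's theorem applied to $B-x-y$; for the other, a tight set $|N_B(S)|=|S|$ with $\emptyset\neq S\subsetneq X$ is matched onto $N_B(S)$ by every perfect matching, so any edge from $X\setminus S$ into $N_B(S)$ lies in no perfect matching, while connectedness forces such an edge to exist, contradicting matching coveredness. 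Granting strict Hall, for any $u\in X$, $v\in Y$ and any nonempty $S\subseteq X\setminus\{u\}$ we get $|N_{B-v}(S)|\ge|N_B(S)|-1\ge|S|$, so $B-u-v$ satisfies Hall and has a perfect matching; this closes the sufficiency induction.

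For \emph{necessity} (matching coveredness yields a bipartite ear decomposition), fix a perfect matching $M$ and build the decomposition greedily while maintaining the invariant that $V(G_i)$ is a union of $M$-edges, so $M\cap E(G_i)$ is a perfect matching of $G_i$. Start from $G_0=K_2$, a single $M$-edge. As long as $G_i\neq G$: if some edge of $E(G)\setminus E(G_i)$ has both ends in $V(G_i)$, it is a trivial ear joining the two parts and I add it. Otherwise, by connectedness there is an edge $e=xy\in E(G)\setminus E(G_i)$ with $x\in V(G_i)$ and $y\notin V(G_i)$; since $x\in V(G_i)$ forces $M(x)\in V(G_i)$, we have $e\notin M$, and choosing (by matching coveredness) a perfect matching $N\ni e$, the edge $e$ lies on an $M$-alternating cycle of $M\triangle N$. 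I take the segment of this cycle leaving $x$ along $e$, running through vertices outside $V(G_i)$ until it first returns to $V(G_i)$, say at $z$. Because every internal vertex of this segment has its $M$-partner outside $V(G_i)$, the segment begins and ends with non-$M$ edges, hence has odd length, its internal vertices are new and come in $M$-pairs, and its ends $x,z$ lie in different parts; so it is a legitimate bipartite ear preserving the invariant. Each step adds at least one edge, so the process stops at $G_k=G$; moreover the first ear necessarily joins the two vertices of $G_0$, making $G_1$ an even cycle, exactly as the definition requires.

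The main obstacle is the sub-lemma of the second paragraph: the sufficiency induction genuinely needs that $G_i$ stays deficiency-free after deleting one vertex from each side, which is precisely where the elementary (strict-Hall) characterization of matching covered bipartite graphs must be established rather than assumed. The parity bookkeeping in the necessity direction---verifying that the extracted segment is an odd ear with brand-new internal vertices---is the only other delicate point, and it is controlled entirely by the invariant that $V(G_i)$ is a union of $M$-edges.
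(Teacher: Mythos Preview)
The paper does not supply its own proof of this lemma; it is quoted from \cite{ref16} (Lov\'asz--Plummer) as a known result and used as a black box. Your write-up is a correct and complete reconstruction of the classical argument: the sufficiency induction via the strict-Hall characterization of elementary bipartite graphs, and the necessity direction by greedily peeling off $M$-alternating ears while maintaining that $V(G_i)$ is a union of $M$-edges, are exactly the ideas in the original source. There is nothing in the paper to compare against beyond the citation, and I see no gap in your argument.
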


\begin{lemma}[\cite{ref16}]\label{mg}
Any matching covered nice subgraph of a minimal matching covered graph is minimal, this is equivalent to saying that if a minimal matching covered bipartite graph $G$ has a bipartite ear decomposition~$C+P_{1}+P_{2}+\cdots+P_{k}$, then~$G_{i}=C+P_{1}+P_{2}+\cdots+P_{i}$~$(1\leq i\leq k)$ is also minimal matching covered.
\end{lemma}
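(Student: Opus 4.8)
The two assertions are equivalent once one observes that each $G_i=C+P_1+\cdots+P_i$ arising in a bipartite ear decomposition is a matching covered nice subgraph of $G$. It is matching covered because $C+P_1+\cdots+P_i$ is itself a bipartite ear decomposition, so Lemma~\ref{m2} applies; and it is nice because $V(G)\setminus V(G_i)$ is precisely the union of the internal vertices of the later ears $P_{i+1},\dots,P_k$, and the internal vertices of each such (odd) ear, being even in number, are perfectly matched by taking alternate edges along that ear. Hence the second assertion is exactly the special case $H=G_i$ of the first, so the plan is to prove the first assertion.

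I would argue the first assertion by contraposition: assuming $H$ is not minimal, so that some $e\in E(H)$ has $H-e$ matching covered, I will show that $G-e$ is matching covered, contradicting the minimality of $G$. Since $G$, being matching covered and larger than $K_{2}$, is $2$-connected, $G-e$ is connected, so it suffices to place every edge $f\neq e$ of $G$ in a perfect matching of $G$ that avoids $e$. Fix a perfect matching $M_0$ of $G-V(H)$, which exists because $H$ is nice. If $f\in E(H)$, then $H-e$ matching covered yields a perfect matching $N$ of $H$ with $f\in N$ and $e\notin N$, and $N\cup M_0$ is the required matching. If $f\notin E(H)$, take any perfect matching $M$ of $G$ with $f\in M$; if $e\notin M$ we are done, so suppose $e\in M$. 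Let $N$ be a perfect matching of $H$ avoiding $e$ and set $W=N\cup M_0$, a perfect matching of $G-e$. In the symmetric difference $M\triangle W$ the edge $e$ lies on an alternating cycle $Z$, and $M\triangle Z$ is a perfect matching of $G$ omitting $e$; it retains $f$ precisely when $f\notin Z$. When $M$ matches no vertex of $H$ to the outside, $M\triangle W$ splits into cycles lying wholly inside $H$ and cycles lying wholly outside, so $Z\subseteq E(H)$ and, as $f\notin E(H)$, automatically $f\notin Z$; this settles the case cleanly.

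The main obstacle is the remaining case, where the set $T$ of vertices of $H$ that $M$ matches to $V(G)\setminus V(H)$ is nonempty. Then $Z$ may cross the boundary of $H$ and pass through $f$, so a single flip of $Z$ would destroy $f$ while removing $e$. To resolve this I would exploit the \emph{full} strength of the hypothesis that $H-e$ is matching covered---which supplies a rich family of perfect matchings $N$ of $H$ avoiding $e$---and choose $N$ (equivalently $W$) so that the alternating cycle through $e$ is steered off $f$, placing $e$ and $f$ on distinct cycles of $M\triangle W$; one then flips only the cycle carrying $e$. Establishing that such a choice always exists, and checking that the resulting edge sets recombine into a genuine perfect matching across the $H$-boundary, is the delicate heart of the argument; the bookkeeping is governed entirely by the boundary set $T$, and it is here that niceness (which provides $M_0$) and the matching-coveredness of $H-e$ must be used together.
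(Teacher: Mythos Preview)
The paper does not prove this lemma; it is cited from~\cite{ref16}, so there is no in-paper argument to compare against. Your proposal, however, has a genuine gap exactly where you flag it.

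The contrapositive reduction and the case $f\in E(H)$ are fine. The trouble is the case $f\notin E(H)$ with $e\in M$ and the $(M,W)$-alternating cycle $Z$ through $e$ also passing through $f$. You propose to ``choose $N$ so that the alternating cycle through $e$ is steered off $f$'' and call this ``the delicate heart of the argument'', but you never establish that such an $N$ exists. The hypothesis only tells you that $H-e$ is matching covered; it does not guarantee a rich supply of perfect matchings of $H$ avoiding $e$, and you give no mechanism that forces $e$ and $f$ onto distinct components of $M\triangle W$ for \emph{some} admissible $N$. As it stands, the proposal is an outline whose decisive step is missing.

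For the bipartite assertion, which is the only form the paper uses, Lemma~\ref{m5} yields a two-line proof that sidesteps the difficulty entirely. If $C'$ is a nice cycle of $H$, combine a perfect matching of $H-V(C')$ with one of $G-V(H)$ (available since $H$ is nice) to see that $C'$ is a nice cycle of $G$; minimality of $G$ then makes $C'$ chordless in $G$, hence in $H$. By Lemma~\ref{m5}, $H$ is minimal. This is precisely the device the paper exploits throughout the proofs of Lemmas~\ref{m6} and~\ref{mx}, and it would replace your incomplete symmetric-difference argument.
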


\begin{lemma}[\cite{ref16}]\label{m4}
If $G$ is a minimal matching covered bipartite graph and is not a 4-cycle, then it contains no 4-cycle. 
\end{lemma}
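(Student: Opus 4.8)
The plan is to argue by contradiction from a smallest counterexample: suppose $G$ is minimal matching covered, bipartite, $G\ncong C_4$, contains a $4$-cycle $C$, and has as few vertices as any graph with these properties. I would fix a bipartite ear decomposition $G=K_2+P_1+\cdots+P_k$ (Lemma~\ref{m2}); by Lemma~\ref{mg} each $G_i$ is itself minimal matching covered, and $G_1$ is an even cycle. Let $j$ be least with $C\subseteq G_j$. Every internal vertex of the ear $P_j$ has degree $2$ in $G_j$, so $C$ cannot leave $P_j$ at an internal vertex; hence $C$ either avoids $P_j$ altogether (impossible, as then $C\subseteq G_{j-1}$, against the choice of $j$) or runs through the whole of $P_j$ from one end $x$ to the other end $y$ and closes up along an $x$--$y$ path $Q$ inside $G_{j-1}$. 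Since $P_j$ has odd length and $x,y$ lie in different colour classes of $G_{j-1}$ (so $Q$ has odd length too), $|C|=4$ forces $\{\,|P_j|,|Q|\,\}=\{1,3\}$. If $|P_j|=1$ then deleting $P_j$ from $G_j$ returns the matching covered graph $G_{j-1}$, contradicting the minimality of $G_j$; so $|P_j|=3$, say $P_j=xw_1w_2y$, the path $Q$ is a single edge $xy\in E(G_{j-1})$, and $C=xw_1w_2yx$.

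Next I would funnel everything into two terminal cases. If $G_j\ncong C_4$ and $j<k$, then $G_j$ is a strictly smaller counterexample, against minimality; and if $G_j\cong C_4$ then $j=1$ (a trivial ear cannot be added to $C_4$), so, as $G\ncong C_4$, we may pass to $G_2=C_4+P_2$, again strictly smaller unless $k=2$. Thus we are left with: (a) $G=C_4+P_2$ for an ear $P_2$, or (b) $j=k$, i.e.\ $G=G_{k-1}+P_k$ with $P_k=xw_1w_2y$ and $xy\in E(G_{k-1})$, where $G_{k-1}$ is connected and matching covered and, since $G\ncong C_4$ excludes $G_{k-1}=K_2$, has at least $4$ vertices. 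Case (a) is immediate: the two ends of $P_2$ lie in different colour classes, hence are the ends of some edge $e$ of the $4$-cycle, and then $G-e$ is a single even cycle, which is matching covered — contradicting that $G$ is minimal.

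For case (b) I would delete the edge $xy$ and show that $G-xy$ is still matching covered, the desired contradiction. A connected matching covered graph on at least $4$ vertices has minimum degree at least $2$, hence no edge lying in \emph{every} perfect matching; so $G_{k-1}$ has a perfect matching $N$ avoiding $xy$ and a perfect matching $M^\ast$ containing $xy$. Given an edge $f\neq xy$ of $G$: if $f\in E(G_{k-1})$, take a perfect matching $M_f$ of $G_{k-1}$ through $f$ and output $M_f\cup\{w_1w_2\}$ when $xy\notin M_f$, or $(M_f\setminus\{xy\})\cup\{xw_1,w_2y\}$ when $xy\in M_f$; the ear edges $xw_1,w_2y$ are carried by $(M^\ast\setminus\{xy\})\cup\{xw_1,w_2y\}$, and $w_1w_2$ by $N\cup\{w_1w_2\}$. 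Each of these is a perfect matching of $G$ avoiding $xy$, hence of $G-xy$, through $f$; and $G-xy$ is connected via $xw_1w_2y$. So $G-xy$ is matching covered, contradicting the minimality of $G$, which finishes the proof.

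The step I expect to be the real work is case (b): one must pick the right edge to delete (the ``chord'' $xy$, not an edge of $C$ nor of the new ear) and then carefully re-route perfect matchings across the swap $xy\leftrightarrow\{xw_1,w_2y\}$, leaning on the fact that a matching covered graph on $\ge 4$ vertices has no forced edge. The surrounding bookkeeping also needs care — checking that the ``smallest counterexample'' reduction really terminates, and that the degenerate possibilities ($G_{k-1}=K_2$, or $C$ already appearing inside $G_1$) are correctly routed into case (a) or ruled out — whereas the ear-parity analysis of the first paragraph, fiddly as it looks, is essentially forced once one notices the degree-$2$ obstruction inside an ear.
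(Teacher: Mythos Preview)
The paper does not prove this lemma; it is quoted without proof from Lov\'asz and Plummer~\cite{ref16}, so there is no in-paper argument to compare against.

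Your argument is correct. The key structural step --- that a $4$-cycle appearing first in $G_j$ must consist of the whole ear $P_j$ together with a single edge $xy\in E(G_{j-1})$, forcing $|P_j|=3$ --- is sound, and the matching-rerouting in case~(b) via the swap $xy\leftrightarrow\{xw_1,w_2y\}$ is exactly the right move. One remark on presentation: the minimal-counterexample wrapper is unnecessary. Since Lemma~\ref{mg} already tells you each $G_i$ is minimal matching covered, you can run your case~(b) argument directly against $G_j$ when $j\ge 2$ (here $G_{j-1}\supseteq G_1$ has at least four vertices), and your case~(a) argument directly against $G_2$ when $j=1$ (where $G_1=C_4$ and $k\ge 2$ since $G\ncong C_4$); in each case you exhibit an edge whose deletion leaves that intermediate graph matching covered, contradicting its minimality, and no size comparison is ever invoked. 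If you keep the counterexample framing, make explicit that every ear $P_{j+1},\dots,P_k$ is nontrivial (same one-line argument you used for $|P_j|\neq 1$), so that $G_j$ genuinely has fewer vertices than $G$.
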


\begin{lemma}[\cite{ref16}]\label{m5}
Let G be a matching covered bipartite graph. Then G is minimal if and only if no nice cycle has a chord.
\end{lemma}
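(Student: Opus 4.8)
The plan is to first reformulate the statement. Since $G$ is minimal precisely when no edge is removable (an edge $e$ being \emph{removable} means $G-e$ is again matching covered), it suffices to prove the equivalent assertion that $G$ has a removable edge if and only if $G$ has a nice cycle with a chord. Fix the bipartition $(A,B)$ of $G$. Given a nice cycle $C$ with a chord $e=uv$ (necessarily $u\in A$, $v\in B$), the chord splits $C$ into two odd $u$-$v$ arcs $P_1,P_2$, so that $C_1=P_1+e$ and $C_2=P_2+e$ are even cycles; writing $M_0$ for a perfect matching of $G-V(C)$ and $M_1,M_2$ for the two alternating perfect matchings of $C$, the sets $\tilde M_i=M_0\cup M_i$ ($i=1,2$) are perfect matchings of $G$ that avoid $e$ (because $e\notin E(C)$), and each $\tilde M_i$ covers $u$ and $v$ through edges of $C$, since $M_0$ touches no vertex of $C$. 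I will use these objects throughout.

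For the direction ``nice cycle with a chord $\Rightarrow$ removable edge'' I claim the chord $e$ itself is removable. Connectivity of $G-e$ is immediate, since the arc $P_1$ joins $u$ to $v$ without using $e$. To see that every edge $f=ab\ne e$ of $G-e$ lies in a perfect matching of $G-e$, pick (by matching-coveredness of $G$) a perfect matching $M\ni f$; if $e\notin M$ we are done, so suppose $e\in M$. If $f$ and $e$ share an endpoint then any perfect matching of $G-a-b$ automatically avoids $e$, so assume $e,f$ are disjoint. I then reroute $M$ through a symmetric difference with $\tilde M_1$ or $\tilde M_2$: since $M,\tilde M_i$ are both perfect matchings, $M\triangle\tilde M_i$ is a union of even alternating cycles, and the edge $e$ lies on one of them, say $D_i$; switching $M$ along $D_i$ frees $e$, and this preserves $f$ provided $f\notin D_i$. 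The key points are that $\tilde M_1\triangle\tilde M_2=E(C)$ while $e\notin E(C)$, and that each $D_i$ must use the $C$-edge of $M_i$ incident to $u$ (the two edges of $D_i$ at $u$ are $e\in M$ and the $\tilde M_i$-edge, which lies on $C$). Comparing $D_1$ and $D_2$, which can differ only inside $E(C)$, then shows at least one index $i$ can be chosen with $f\notin D_i$, yielding the required perfect matching of $G-e$.

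For the converse, ``removable edge $\Rightarrow$ nice cycle with a chord,'' let $e=uv$ be removable. Choose a perfect matching $M\ni e$ of $G$ and a perfect matching $N$ of $G-e$; then $e\in M\setminus N$ lies on an alternating cycle $D$ of $M\triangle N$, and since $N\setminus E(D)$ is a perfect matching of $G-V(D)$, the cycle $D$ is nice and contains $e$ as an edge. To upgrade this to a chord I use that a matching covered graph on at least four vertices is $2$-connected, so $G-e$ is $2$-connected and $u,v$ lie on a common cycle of $G-e$; concretely, combining the alternating $u$-$v$ path $D-e$ with a second, internally disjoint $u$-$v$ path extracted from another perfect matching of $G-e$ produces a cycle $C'$ through $u,v$ that avoids $e$, whence $e$ is a chord of $C'$. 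One arranges $C'$ to be nice by assembling it from perfect-matching-alternating paths, so that the perfect matching restricted off $C'$ covers $G-V(C')$.

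The main obstacle is exactly this niceness bookkeeping in both directions. In the first direction it is the verification that the reroute can be chosen to miss the target edge $f$; this is where the identities $\tilde M_1\triangle\tilde M_2=E(C)$ and $e\notin E(C)$, together with the fact that every $\tilde M_i$ covers $u,v$ by edges of $C$, do the decisive work. In the converse it is producing a nice cycle on which $e$ is a genuine chord rather than merely an edge: the two-disjoint-paths construction must be carried out so that the resulting cycle is nice, and I expect most of the effort to be spent controlling the alternating structure that guarantees this.
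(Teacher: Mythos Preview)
The lemma is quoted from \cite{ref16} and is not proved in the present paper, so there is no in-paper argument to compare against. Judged on its own, your proposal has genuine gaps in both directions.

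For ``chord $\Rightarrow$ removable'', the key claim that ``$D_1$ and $D_2$ can differ only inside $E(C)$'' does not follow from $\tilde M_1\triangle\tilde M_2=E(C)$: that identity says the edge \emph{sets} $M\triangle\tilde M_1$ and $M\triangle\tilde M_2$ agree off $E(C)$, not that the particular components $D_1,D_2$ through $e$ do. A concrete failure: let $C$ be the $8$-cycle on $1,2,\ldots,8$ with chord $e=16$, add vertices $a,b$ and edges $3a,ab,b8,27$, and take $M=\{16,45,3a,b8,27\}$. Here both $D_1$ and $D_2$ contain $f=3a$, so switching along either returns $\tilde M_1$ or $\tilde M_2$, neither of which contains $f$; yet $\{3a,b8,12,45,67\}$ is a perfect matching of $G-e$ through $f$. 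Your rerouting mechanism therefore need not produce the desired matching.

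For ``removable $\Rightarrow$ chord'', you correctly isolate the crux---one must exhibit a \emph{nice} cycle of $G-e$ through both $u$ and $v$, so that $e$ becomes a chord---but ``one arranges $C'$ to be nice by assembling it from perfect-matching-alternating paths'' is an intention, not a proof. Two-connectivity of $G-e$ yields two internally disjoint $u$--$v$ paths, but their union is not nice in general; forcing both paths to alternate with respect to a \emph{single} perfect matching while remaining internally disjoint is precisely the non-trivial step, and it is not supplied.
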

 
Let$~A=(a_{ij})$ and $B=(b_{ij})$ be two $n\times n$ matrices. Define $A\leq B$ if $a_{ij}\leq b_{ij}$ for all $i$ and $j$, and define $A<B$ if $A\leq B$ and $A\neq B$.

\begin{lemma}[\cite{ref18}]\label{m9}
Let~$A=(a_{ij})$\,and\,$B=(b_{ij})$~be two~$n\times n$~matrices with the spectral radii $\rho(A)$ and $\rho(B)$, respectively. If $0\leq A\leq B$, then $\rho(A)\leq \rho(B)$. Furthermore, if $B$ is irreducible and~$0\leq A < B$, then~$\rho(A)< \rho(B)$.
\end{lemma}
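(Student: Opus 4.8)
The plan is to prove both assertions from the Perron--Frobenius theory of nonnegative matrices, keeping in mind that the inequalities $0\le A\le B$ are meant entrywise and that every nonnegative $n\times n$ matrix $M$ has its spectral radius $\rho(M)$ as an eigenvalue with an associated nonnegative eigenvector. For the weak inequality I would first note that entrywise order is preserved under multiplication of nonnegative matrices, so $0\le A^{k}\le B^{k}$ for all $k\ge 1$ by induction. Since the maximum-row-sum norm $\|M\|_{\infty}=\max_{i}\sum_{j}|m_{ij}|$ is monotone on nonnegative matrices, this gives $\|A^{k}\|_{\infty}\le\|B^{k}\|_{\infty}$, and Gelfand's formula $\rho(M)=\lim_{k\to\infty}\|M^{k}\|_{\infty}^{1/k}$ then yields $\rho(A)\le\rho(B)$ by taking $k$-th roots and letting $k\to\infty$.

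For the strict inequality, assume $B$ is irreducible and $0\le A<B$. Irreducibility supplies a strictly positive left Perron vector $y>0$ with $y^{\top}B=\rho(B)\,y^{\top}$, while $A$ has a nonnegative right eigenvector $z\ge 0$, $z\ne 0$, with $Az=\rho(A)z$. Then $\rho(A)(y^{\top}z)=y^{\top}Az\le y^{\top}Bz=\rho(B)(y^{\top}z)$, and $y^{\top}z>0$ recovers $\rho(A)\le\rho(B)$. To upgrade this to a strict inequality I would argue by contradiction: if $\rho(A)=\rho(B)$, then $y^{\top}(B-A)z=0$, and since $B-A\ge 0$, $y>0$, $z\ge 0$, every summand must vanish, forcing $(b_{ij}-a_{ij})z_{j}=0$ for all $i,j$.

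The crux of the argument, and the step I expect to be the main obstacle, is that $A$ need not be irreducible, so its eigenvector $z$ may be zero on exactly the coordinates where $A$ and $B$ differ; one therefore cannot directly exploit $A\ne B$. The resolution is to play the support of $z$ against the irreducibility of $B$. Put $S=\{j:z_{j}>0\}$ and $T=\{j:z_{j}=0\}$; both are nonempty, since $z\ne 0$ gives $S\ne\emptyset$, while a coordinate $j_{0}$ with $a_{i_{0}j_{0}}<b_{i_{0}j_{0}}$ forces $z_{j_{0}}=0$ and hence $T\ne\emptyset$. The relation $(b_{ij}-a_{ij})z_{j}=0$ shows that $A$ and $B$ have identical columns for every index in $S$, while reading the $i$-th row of $Az=\rho(A)z$ for $i\in T$ gives $\sum_{j\in S}a_{ij}z_{j}=0$, whence $a_{ij}=0$ and therefore $b_{ij}=0$ for all $i\in T$, $j\in S$. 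Thus the digraph of $B$ contains no arc from $T$ to $S$, so no walk reaches $S$ from $T$, contradicting the strong connectivity that is equivalent to the irreducibility of $B$. This contradiction forces $\rho(A)<\rho(B)$, completing the proof.
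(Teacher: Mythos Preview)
Your argument is correct. The paper does not give its own proof of this lemma; it is quoted verbatim as a classical result from Berman and Plemmons (\cite{ref18}) and used as a black box throughout. Your derivation --- the monotonicity of powers plus Gelfand's formula for the weak inequality, and the left/right Perron vector pairing with a support argument against irreducibility for the strict one --- is exactly the standard Perron--Frobenius proof one would find in that reference, so there is nothing to contrast.

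One minor remark: your first paragraph already re-derives $\rho(A)\le\rho(B)$, so the pairing $y^{\top}Az\le y^{\top}Bz$ in the second paragraph is only needed to set up the equality case; you might streamline by going straight to the contradiction argument once the weak inequality is in hand.
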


It is known that the adjacency matrix of a connected graph is irreducible. Let~$E_{G}(T,S)$ denote the set of edges in graph~$G$~with one end in~$T$~and the other end in~$S$. 
\begin{lemma}[\cite{ref200}]\label{m11}
Let~$G$~be a connected graph of order$~n$, and $\mathbf{x}$ be the Perron vector of~$G$. Let~$S_{1}, S_{2}$ and $T$~be three nonempty disjoint subsets of $V(G)$, and~$E_{G}(T,S_{1})=\emptyset$, $|E_{G}(T,S_{2})|=|T||S_{2}|.$~If~$\sum\limits_{u\in S_{1}}x_{u}\geq \sum\limits_{v\in S_{2}}x_{v}$, then~$\rho(G+E_{\overline{G}}(T,S_{1})-E_{G}(T,S_{2}))>\rho(G).$  
\end{lemma}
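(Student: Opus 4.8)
The plan is to prove this by a Rayleigh-quotient (edge-switching) argument, treating the non-strict hypothesis carefully so as to recover a strict conclusion. Write $A=A(G)$ and set $G'=G+E_{\overline{G}}(T,S_{1})-E_{G}(T,S_{2})$ with adjacency matrix $A'=A(G')$. Since $\mathbf{x}$ is a unit Perron vector of $G$, we have $\mathbf{x}^{T}A\mathbf{x}=\rho(G)$. Because $A'$ is a symmetric nonnegative matrix, its spectral radius equals its largest eigenvalue, which is the Rayleigh maximum; hence $\rho(G')\geq \mathbf{x}^{T}A'\mathbf{x}$. The hypotheses $E_{G}(T,S_{1})=\emptyset$ and $|E_{G}(T,S_{2})|=|T||S_{2}|$ mean precisely that passing from $G$ to $G'$ inserts \emph{every} edge between $T$ and $S_{1}$ and deletes \emph{every} edge between $T$ and $S_{2}$. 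Expanding $\mathbf{x}^{T}A'\mathbf{x}-\mathbf{x}^{T}A\mathbf{x}$ as twice the sum over inserted edges minus twice the sum over deleted edges, the product structure factors as
\[
\mathbf{x}^{T}A'\mathbf{x}-\rho(G)=2\Big(\sum_{t\in T}x_{t}\Big)\Big(\sum_{u\in S_{1}}x_{u}-\sum_{v\in S_{2}}x_{v}\Big).
\]
By the hypothesis $\sum_{u\in S_{1}}x_{u}\geq\sum_{v\in S_{2}}x_{v}$ together with the positivity of $\mathbf{x}$, this quantity is nonnegative, so $\rho(G')\geq\rho(G)$ is immediate.

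The delicate point, and the main obstacle, is upgrading this to a strict inequality, since the hypothesis only furnishes ``$\geq$''. I would split into two cases. If $\sum_{u\in S_{1}}x_{u}>\sum_{v\in S_{2}}x_{v}$ strictly, then the displayed difference is strictly positive (as $\sum_{t\in T}x_{t}>0$ because $T\neq\emptyset$ and $\mathbf{x}>0$), whence $\rho(G')\geq\mathbf{x}^{T}A'\mathbf{x}>\rho(G)$. The harder case is equality $\sum_{u\in S_{1}}x_{u}=\sum_{v\in S_{2}}x_{v}$, where $\mathbf{x}^{T}A'\mathbf{x}=\rho(G)$ and the Rayleigh bound alone only gives $\rho(G')\geq\rho(G)$. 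Here I would argue by contradiction: suppose $\rho(G')=\rho(G)$. Then $\mathbf{x}$ attains the Rayleigh maximum for $A'$, and the equality case of the Rayleigh characterization for a symmetric matrix forces $\mathbf{x}$ to lie in the top eigenspace of $A'$, i.e. $A'\mathbf{x}=\rho(G')\mathbf{x}=\rho(G)\mathbf{x}$.

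Combining $A'\mathbf{x}=\rho(G)\mathbf{x}$ with $A\mathbf{x}=\rho(G)\mathbf{x}$ yields $(A'-A)\mathbf{x}=\mathbf{0}$. I would then read off the row of $A'-A$ indexed by any fixed vertex $s\in S_{1}$ (which exists since $S_{1}\neq\emptyset$): this row has entry $+1$ in each column of $T$ and $0$ elsewhere, because the only edges incident to $s$ that change are the newly inserted edges joining $s$ to all of $T$. Hence the $s$-coordinate of $(A'-A)\mathbf{x}$ equals $\sum_{t\in T}x_{t}$, which must then vanish; but $\mathbf{x}>0$ and $T\neq\emptyset$ give $\sum_{t\in T}x_{t}>0$, a contradiction. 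Therefore $\rho(G')>\rho(G)$ in the equality case as well, completing the proof. Note that the argument uses only the positivity of the Perron vector and the nonemptiness of $S_{1}$ and $T$; connectivity of $G'$ is not required, since the equality-case characterization of the Rayleigh quotient holds for every symmetric matrix.
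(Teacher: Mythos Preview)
Your proof is correct. The paper does not supply its own proof of this lemma: it is quoted from \cite{ref200} and used as a tool, so there is no in-paper argument to compare against. Your Rayleigh-quotient computation together with the equality-case analysis (forcing $(A'-A)\mathbf{x}=\mathbf{0}$ and reading off the $S_{1}$-row to obtain $\sum_{t\in T}x_{t}=0$) is the standard way such edge-switching lemmas are established, and is essentially the argument one finds in the cited source. The careful treatment of the strict inequality when the hypothesis is only ``$\ge$'' is exactly the point of the lemma and you handled it cleanly; your remark that connectivity of $G'$ is unnecessary is also correct, since the equality case of the Rayleigh characterization holds for any real symmetric matrix.
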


An {\it automorphism} of a graph is an isomorphism of the graph to itself. We say that $u$ and $v$ are {\it equivalent} in $G$, if there exists an automorphism $p: G\rightarrow G$ such that $p(u)=v$. Vertex equivalence implies the following property of eigenvectors to $\rho(G)$.
\begin{lemma}[\cite{ref35}]\label{m10}
  Let $G$ be a connected graph of order $n$ and let $\mathbf{x}$ be the Perron vector of $A(G)$. If $u$ and $v$ are equivalent vertices in $G$, then $x_{u}=x_{v}$.
\end{lemma}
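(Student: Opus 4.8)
The plan is to translate the combinatorial hypothesis—existence of an automorphism carrying $u$ to $v$—into the linear-algebraic statement that the adjacency matrix commutes with a permutation matrix, and then to exploit the one-dimensionality of the Perron eigenspace. First I would fix an automorphism $p\colon G\rightarrow G$ with $p(u)=v$, and associate to it the $n\times n$ permutation matrix $P$ defined by $P_{ij}=1$ precisely when $p(j)=i$ (and $P_{ij}=0$ otherwise). The defining property of an automorphism, namely that $ij\in E(G)$ if and only if $p(i)p(j)\in E(G)$, records exactly as $A_{ij}=A_{p(i),p(j)}$ for the adjacency matrix $A=A(G)$. A short index computation then gives $(P^{T}AP)_{ij}=A_{p(i),p(j)}=A_{ij}$, so $P^{T}AP=A$; since $P$ is orthogonal ($P^{T}=P^{-1}$), this is equivalent to the commutation relation $AP=PA$.

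The key step is to feed the Perron vector through $P$. Writing $\rho=\rho(G)$ and $A\mathbf{x}=\rho\mathbf{x}$, the commutation relation yields $A(P\mathbf{x})=P(A\mathbf{x})=\rho(P\mathbf{x})$, so $P\mathbf{x}$ is again an eigenvector of $A$ belonging to the eigenvalue $\rho$. Because $G$ is connected, $A$ is irreducible and nonnegative, so by the Perron-Frobenius Theorem~\cite{ref32} the eigenspace associated with $\rho$ is one-dimensional and is spanned by a positive vector. Now $P\mathbf{x}$ is obtained from the positive unit vector $\mathbf{x}$ merely by permuting its coordinates, so $P\mathbf{x}$ is itself a positive unit vector lying in this one-dimensional eigenspace; the only positive unit vector therein is $\mathbf{x}$ itself, whence $P\mathbf{x}=\mathbf{x}$.

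Finally I would read off the coordinates. From the definition of $P$ one computes $(P\mathbf{x})_{i}=x_{p^{-1}(i)}$, so $P\mathbf{x}=\mathbf{x}$ amounts to $x_{p^{-1}(i)}=x_{i}$ for every $i$, equivalently $x_{p(i)}=x_{i}$ for all $i$. Taking $i=u$ and using $p(u)=v$ yields $x_{u}=x_{v}$, as claimed.

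As for the main obstacle, the argument is short and the only point demanding care is the uniqueness invoked in the middle paragraph. I would make sure to state plainly that connectedness forces irreducibility of $A$ (already noted in the excerpt) and that Perron-Frobenius then guarantees both that $\rho$ is a simple eigenvalue and that its eigenvector is positive; it is these two facts together that force $P\mathbf{x}$ and $\mathbf{x}$—two positive unit eigenvectors for the \emph{same} eigenvalue—to coincide rather than merely be proportional. Everything else reduces to bookkeeping with the permutation matrix $P$.
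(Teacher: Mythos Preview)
Your proof is correct and is the standard argument for this fact. Note, however, that the paper does not actually prove this lemma: it is quoted directly from~\cite{ref35} and stated without proof, so there is nothing in the paper to compare your approach against.
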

\begin{lemma}\label{mb}
The spectral radius of $P^{*}_{3}$ with order $n$ (see Figure 1) is the largest root of the equation~$\rho^{2}-\rho-\frac{n-2}{2}=0$.
\end{lemma}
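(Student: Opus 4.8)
The plan is to exploit the rich automorphism group of $P^{*}_{3}$. Write $k=\frac{n-2}{2}$, label the two common ends of the paths by $u$ and $v$, and for $1\le i\le k$ let $u\,a_{i}\,b_{i}\,v$ be the $i$-th path of length $3$. Since $P^{*}_{3}$ is connected, it has a positive Perron vector $\mathbf{x}$ corresponding to $\rho:=\rho(P^{*}_{3})$. Every permutation of the $k$ internal paths is an automorphism, and the map swapping $u\leftrightarrow v$ together with $a_{i}\leftrightarrow b_{i}$ for all $i$ is an automorphism; hence by Lemma~\ref{m10} there are scalars $c,a,b$ with $x_{u}=x_{v}=c$, $x_{a_{i}}=a$ and $x_{b_{i}}=b$ for every $i$. (Equivalently, the partition $\{\{u\},\{v\},\{a_{1},\dots,a_{k}\},\{b_{1},\dots,b_{k}\}\}$ is equitable and $\rho$ is the Perron root of its quotient matrix.)

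Next I would write out $A(P^{*}_{3})\mathbf{x}=\rho\mathbf{x}$ at a vertex of each type: at $u$ (and symmetrically at $v$) it reads $\rho c=ka$ (resp. $\rho c=kb$); at $a_{i}$ it reads $\rho a=c+b$; at $b_{i}$ it reads $\rho b=a+c$. Subtracting the last two equations gives $(\rho+1)(a-b)=0$, so $a=b$ because $\rho>0$; since $\mathbf{x}>0$ we have $a=b>0$. Substituting back, $\rho a=a+c$ yields $c=(\rho-1)a$, and then $\rho c=ka$ becomes $\rho(\rho-1)a=ka$. Dividing by $a\neq0$ gives $\rho^{2}-\rho-k=0$, that is, $\rho^{2}-\rho-\frac{n-2}{2}=0$. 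Finally, the quadratic $t^{2}-t-\frac{n-2}{2}$ has product of roots $-\frac{n-2}{2}<0$, so it has exactly one positive and one negative root; as $\rho>0$, it must be the larger (positive) root, which is exactly the claim.

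The computation itself is short, so the only steps requiring any care are the structural ones: (i) justifying the symmetric form of the Perron vector, which is precisely Lemma~\ref{m10} applied to the path-permuting and end-swapping automorphisms of $P^{*}_{3}$, and (ii) identifying the extracted value as the \emph{largest} root of the quadratic rather than the other one, which follows from positivity of the spectral radius. Beyond this bookkeeping I do not expect a genuine obstacle.
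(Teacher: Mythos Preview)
Your proof is correct and follows essentially the same approach as the paper: use vertex equivalence (Lemma~\ref{m10}) to collapse the Perron vector to a few values, write the eigenvector equations, and solve for $\rho$. The only cosmetic difference is that the paper immediately identifies \emph{all} degree-$2$ vertices as mutually equivalent (so it works with just two unknowns $x_{u_1}$ and $x_{u_3}$), whereas you keep $a$ and $b$ separate and then derive $a=b$ algebraically; your extra step is harmless, and your explicit justification that $\rho$ is the \emph{largest} root of the quadratic is a detail the paper omits.
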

\begin{proof}~Let $\rho=\rho(P^{*}_{3})$ and $\textbf{x}$~be the~Perron~vector of~$P^{*}_{3}$, denote the two vertices with degree~$\frac{n-2}{2}$~in~$ P^{*}_{3}$ by $u_{1}$~and $u_{2}$, and denote the vertices with degree 2 by~$u_{3}, u_{4}, \ldots, u_n$. For any vertex~$u_i$ in $ P^{*}_{3}$, we can obtain that$$\rho x_{u_{i}}=(A(P^{*}_{3})\textbf{x})_{u_i}=\sum\limits_{{u_{j}\in N_{P^{*}_{3}}(u_{i})}}x_{u_{j}}, 1\leq i\leq n.$$By Lemma~\ref{m10}, we have $x_{u_{1}}=x_{u_{2}}$, $x_{u_{3}}=\cdots=x_{u_{n}}$, and
$$\rho x_{u_{1}}=\Big(\frac{n-2}{2}\Big)x_{u_{3}}, ~\rho x_{u_{3}}=x_{u_{1}}+ x_{u_{3}}.$$
Combining the above two equations, we have $\rho^{2}-\rho-\frac{n-2}{2}=0$. The conclusion is proved.
\end{proof}
\begin{lemma}\label{m6}
Let~$G$~be a minimal matching covered bipartite graph with a bipartite ear decomposition~$C+P_{1}+P_{2}+\cdots+P_{k}$. Let~$G'=G+P_{k+1}$, where~$P_{k+1}$~is a nontrivial ear of~$G$. If the two ends of $P_{k+1}$ are two nonadjacent vertices and in different parts of $P_{k}$, then~$G'$~is a minimal matching covered bipartite graph.
\end{lemma}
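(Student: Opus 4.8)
The plan is to verify separately the two properties in the definition of a minimal matching covered bipartite graph. Since $G=C+P_{1}+\cdots+P_{k}$ is a bipartite ear decomposition and $P_{k+1}$ is an odd ear joining two vertices in different parts of $G$, the sequence $C+P_{1}+\cdots+P_{k}+P_{k+1}$ is again a bipartite ear decomposition of $G'$; hence $G'$ is matching covered by Lemma~\ref{m2}, and it is bipartite because we only appended an odd path between the two colour classes.

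For minimality I would invoke Lemma~\ref{m5}: it is enough to show that no nice cycle of $G'$ has a chord. Suppose, for contradiction, that $D$ is a nice cycle of $G'$ carrying a chord $e$. Write $P_{k+1}=aw_{1}w_{2}\cdots w_{2t}b$, so $2t\ge 2$ since $P_{k+1}$ is a nontrivial odd ear, and set $Q:=P_{k}[a,b]$, which is an $(a,b)$-path of odd length because $a$ and $b$ lie in different parts. Two structural facts organise the proof: every $w_{i}$ has degree $2$ in $G'$, and every internal vertex of $P_{k}$ (hence of $Q$) has degree $2$ in $G$, because $P_{k}$ is the last ear of $G$. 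The first fact shows that any cycle of $G'$ contains either all of $P_{k+1}$ or none of its internal vertices; the second gives a degree-$2$ propagation principle: a cycle that uses one internal vertex of $Q$ is forced to run along $P_{k}$ and, in particular, to pass through $a$.

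In the first case $D$ contains no internal vertex of $P_{k+1}$. Since $P_{k+1}$ is nontrivial, each of its edges meets some $w_{i}$, so $D\subseteq G$ and $e\in E(G)$ is a chord of $D$ in $G$; thus it suffices to show that $D$ is a nice cycle of $G$, contradicting the minimality of $G$. Given a perfect matching $M'$ of $G'-V(D)$, matching the $w_{i}$ forces $M'\cap E(P_{k+1})$ to equal $\{w_{1}w_{2},w_{3}w_{4},\dots,w_{2t-1}w_{2t}\}$ — in which case $M'\setminus E(P_{k+1})$ is already a perfect matching of $G-V(D)$ — or $\{aw_{1},w_{2}w_{3},\dots,w_{2t}b\}$, which forces $a,b\notin V(D)$; in the latter subcase the propagation principle gives $V(Q)\cap V(D)=\emptyset$, and, writing $Q=aq_{1}\cdots q_{2s}b$, one builds a perfect matching of $G-V(D)$ from $M'$ by deleting the matching edges that $M'$ is forced to use inside $Q$ and inserting $\{aq_{1},q_{2}q_{3},\dots,q_{2s}b\}$. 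In the second case $P_{k+1}\subseteq D$; then $D=P_{k+1}+R$ with $R$ an $(a,b)$-path in $G$ of odd length, and $e\in E(G)$ has both ends on $R$. If $R$ meets an internal vertex of $Q$, propagation forces $R=Q$, so the ends of $e$ lie on $P_{k}$; as internal vertices of $P_{k}$ have degree $2$, these ends must be $a$ and $b$, i.e.\ $e=ab$, contradicting the nonadjacency of $a$ and $b$. Otherwise $R$ and $Q$ are internally disjoint and edge-disjoint, so $D':=R+Q$ is an even cycle of $G$; from the perfect matching of $G'-V(D)=G-V(R)$, whose restriction to $V(Q)\setminus\{a,b\}$ is again forced by propagation, one extracts a perfect matching of $G-V(D')$, so $D'$ is a nice cycle of $G$; moreover $e$ is a chord of $D'$ because $e\notin E(R)$ and $e\notin E(Q)$ (as $e\ne ab$), again contradicting the minimality of $G$.

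The step I expect to be the main obstacle is the matching bookkeeping in the two subcases above, that is, transferring a perfect matching across the swap $P_{k+1}\leftrightarrow Q$. This is exactly where each hypothesis is used: ``nontrivial'' forces $D\subseteq G$ in the first case, ``ends on $P_{k}$'' supplies the degree-$2$ propagation, ``different parts'' makes $Q$ and $P_{k+1}$ odd so the swapped cycles remain even, and ``nonadjacent'' eliminates the stray chord $e=ab$ in the second case.
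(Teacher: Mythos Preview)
Your proof is correct and follows essentially the same approach as the paper: verify matching-coveredness via Lemma~\ref{m2}, then use Lemma~\ref{m5} and split on whether the nice cycle contains $P_{k+1}$, transferring in each case to a nice cycle of $G$ via the swap $P_{k+1}\leftrightarrow P_{k}[a,b]$. Your bookkeeping for the matchings is the same symmetric-difference idea the paper uses (their $M\triangle E(P_{k}[u,v])$), and your degree-$2$ propagation is exactly the observation the paper records as ``the degree of the internal vertices of $P_{k}$ is $2$ in $G$''. In the subcase $D=P_{k+1}+Q$, you dispose of the only possible chord $ab$ directly from the nonadjacency hypothesis; the paper instead argues (somewhat redundantly) by re-ordering the ear decomposition, but the content is the same.
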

\begin{proof}
According to Lemma~\ref{m2}, $G'$~is a matching covered bipartite graph. Let $C'$ be any nice cycle of $G'$, and $M'$ be a perfect matching in $G'-V(C')$. In order to prove that $G'$ is a minimal matching covered bipartite graph, by Lemma~\ref{m5}, it suffices to prove that $C'$ has no chords. Set $M=M'\cap E(G)$. Suppose that the two ends of $P_{k+1}$~are $u$ and $v$.

If $E(C')\cap E(P_{k+1})=\emptyset $, then $C'$ is in $G$. If~$M'\cap E(P_{k+1})$~does not cover $u$ and $v$, then $M$ is a perfect matching of $G-V(C')$, and $C'$ is also a nice cycle of $G$. If~$M'\cap E(P_{k+1})$~covers $u$ and $v$, then $P_{k}[u,v]$ is not contained in $C'$. So~$M \triangle E(P_{k}[u,v])$~is a perfect matching of~$G-V(C')$, and $C'$ is also a nice cycle of $G$. Since $G$ is a minimal matching covered bipartite graph, $C'$ has no chords by Lemma~\ref{m5}.

If $E(C')\cap E(P_{k+1})\neq \emptyset$, then $P_{k+1}$~is contained in $C'$. Note that the degree of the internal vertices of $P_{k}$~is 2 in $G$. If the cycle $C'$ is the union of the path~$P_{k}[u,v]$~and the ear $P_{k+1}$, then $C'$ contains no chords. Otherwise, if $C'$ has a chord, then this chord is $uv$ that is in $G$, and $u$ and $v$ are the ends of $P_{k}$. If $uv$ is one of the ears $P_{1},P_{2},\ldots,P_{k-1}$, then $G$ has a trivial ear, which contradicts the fact that $G$ is a minimal matching covered bipartite graph. If $uv$ is on one of the ears $P_{1},P_{2},\ldots,P_{k-1}$, say $P_{i}$, then the ear decomposition of $G$ can be adjusted to $C+P_{1}+\cdots+P_{i-1}+(P_{i}-uv+P_{k})+P_{i+1}+\cdots+P_{k-1}+uv$. At this time, $uv$~is a trivial ear of $G$, which contradicts that $G$ is a minimal matching covered bipartite graph.

If~$E(C') \cap E(P_{k}[u,v])=\emptyset$, replace the path$~P_{k+1}$~in $C'$ with the path $P_{k}[u,v]$, and denote the resulting cycle by $C''$. Then\,$M\setminus(M'\cap E(P_{k}[u,v]))$\,is a perfect matching of $G-V(C'')$. Therefore, $C''$ is a nice cycle of $G$. Since $G$ is a minimal matching covered bipartite graph, the graph $C''$ has no chords by Lemma \ref{m5}. Thus $C'$ has no chords in $G'$.

To sum up, the nice cycles of $G'$ have no chords. The proof is complete.
\end{proof}

\begin{lemma}\label{mx}
Let $G$ be a minimal matching covered bipartite graph with a bipartite ear decomposition~$C+P_{1}+\cdots+P_{k}+P_{k+1}+\cdots+P_{k+r}$, where $P_{k+i}$ has at least one end on $P_{k}$, $i=1,2,\ldots,r$. If $G'=G+P_{k+r+1}$, where $P_{k+r+1}$ is a nontrivial ear of $G$, its two ends are two nonadjacent vertices in different parts of $P_{k}$, and $P_{k+r+1}$ is compatible with $P_{k+i}$~$(1\leq i\leq r)$, then $G'$ is a minimal matching covered bipartite graph.     
\end{lemma}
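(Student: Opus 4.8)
The plan is to follow the proof of Lemma~\ref{m6} almost verbatim — the present statement being that proof's $r=0$ case — feeding in the extra ears $P_{k+1},\dots,P_{k+r}$ only where they are actually needed. Since $C+P_1+\cdots+P_k+P_{k+1}+\cdots+P_{k+r}+P_{k+r+1}$ is again a bipartite ear decomposition (the old ears are kept and the new ear $P_{k+r+1}$ joins the two parts of $G$), Lemma~\ref{m2} gives that $G'$ is matching covered bipartite. By Lemma~\ref{m5} it then suffices to show that no nice cycle of $G'$ has a chord. So fix a nice cycle $C'$ of $G'$, a perfect matching $M'$ of $G'-V(C')$, set $M=M'\cap E(G)$, and let $u,v$ be the two ends of $P_{k+r+1}$, which are nonadjacent and lie in different parts of $P_k$. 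One may alternatively run an induction on $r$: deleting the interior of $P_{k+r}$ yields a nice subgraph $G^-$ of $G$, minimal matching covered by Lemma~\ref{mg}, so the induction hypothesis applies to $G^-+P_{k+r+1}$ and it remains only to add the ear $P_{k+r}$ back — which is handled by exactly the case analysis below.

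If $E(C')\cap E(P_{k+r+1})=\emptyset$, then $C'\subseteq G$, and as in Lemma~\ref{m6} the matching $M'\cap E(P_{k+r+1})$ either covers neither of $u,v$, so that $M$ is already a perfect matching of $G-V(C')$, or covers both of them, in which case one repairs $M$ into a perfect matching of $G-V(C')$ by an $M$-alternating rotation along $P_k[u,v]$; either way $C'$ is a nice cycle of the minimal graph $G$, hence chord-free, and no edge of the nontrivial ear $P_{k+r+1}$ can be a chord of $C'$. If $E(C')\cap E(P_{k+r+1})\neq\emptyset$, then $P_{k+r+1}\subseteq C'$ (its interior vertices have degree $2$ in $G'$), so $C'=P_{k+r+1}\cup Q$ with $Q$ a $(u,v)$-path, and any chord of $C'$ lies in $G$. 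Reproducing the case distinctions of Lemma~\ref{m6} — either $C'=P_k[u,v]\cup P_{k+r+1}$, which has no chord; or $E(C')\cap E(P_k[u,v])=\emptyset$, so that the $P_{k+r+1}$-part of $C'$ can be swapped for $P_k[u,v]$, producing a nice cycle $C''$ of $G$ with the same chord; or the chord is forced onto an earlier ear and, after rearranging the ear decomposition, becomes a trivial ear of $G$, contradicting minimality — every chord is pushed into $G$, where Lemma~\ref{m5} forbids it.

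The genuinely new points, and the only places where the hypotheses on $P_{k+1},\dots,P_{k+r}$ are used, are the following two. In Lemma~\ref{m6} the interior of $P_k$ has degree $2$ in $G$, which forces $C'\cap P_k[u,v]=\emptyset$ whenever $u,v\notin V(C')$ and so makes the rotation along $P_k[u,v]$ legitimate; here those vertices may also carry ends of $P_{k+1},\dots,P_{k+r}$, so one must argue — using that each $P_{k+i}$ is incident to $P_k$ and that $G$ is already minimal — that this rotation (or a suitable alternative $M$-alternating adjustment) still yields a perfect matching of $G-V(C')$. Secondly, when $C'$ simultaneously meets $P_k$, the new ear $P_{k+r+1}$, and an earlier ear $P_{k+i}$, a chord can arise only if $P_{k+r+1}$ and $P_{k+i}$ \emph{cross} along $P_k$; the compatibility hypothesis is exactly what excludes this, because two crossing ears on $P_k$ would yield a nice cycle traversing both ears with a segment of $P_k$ as a chord, contradicting the minimality of $G$. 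I expect this crossing analysis, together with keeping the matching rotations clean in the presence of the extra ears, to be the main obstacle; the rest is a faithful transcription of the proof of Lemma~\ref{m6}.
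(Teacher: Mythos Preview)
Your overall strategy is right, but you have misread what the compatibility hypothesis gives you, and this leaves the two ``genuinely new points'' you flag unresolved. The key observation, which the paper uses in one line, is that since $P_{k+r+1}$ is compatible with each $P_{k+i}$, \emph{no end of any $P_{k+i}$ is an internal vertex of $P_k[u,v]$}; hence the internal vertices of $P_k[u,v]$ still have degree~$2$ in $G$ and in $G'$. This is precisely what ``ends appear in order along $P_k$'' means (compare the contrapositive argued in Lemma~\ref{m12}(ii)), and it is the only place compatibility is invoked.

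With this in hand both of your anticipated difficulties evaporate. In the case $E(C')\cap E(P_{k+r+1})=\emptyset$ with $u,v$ covered by $M'\cap E(P_{k+r+1})$, the degree-$2$ condition forces every vertex of $P_k[u,v]$ to lie outside $V(C')$ and forces $M$ to match the interior of $P_k[u,v]$ along $P_k$; thus $M\triangle E(P_k[u,v])$ is already a perfect matching of $G-V(C')$, and no ``alternative $M$-alternating adjustment'' is needed. In the case $P_{k+r+1}\subseteq C'$, the same degree-$2$ condition makes the dichotomy ``$C'=P_k[u,v]\cup P_{k+r+1}$'' versus ``$E(C')\cap E(P_k[u,v])=\emptyset$'' exhaustive, so there is no third sub-case in which $C'$ meets $P_k[u,v]$ and some $P_{k+i}$ simultaneously, and no crossing analysis is required. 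Your sentence that the interior of $P_k[u,v]$ ``may also carry ends of $P_{k+1},\dots,P_{k+r}$'' is exactly what compatibility excludes; once you use the hypothesis correctly the proof is the faithful transcription of Lemma~\ref{m6} you expected. (Your alternative induction on $r$ is also shaky: re-attaching $P_{k+r}$ to $G^-+P_{k+r+1}$ does not fit the hypotheses of either Lemma~\ref{m6} or the present lemma, since $P_{k+r}$ is only assumed to have \emph{one} end on $P_k$.)
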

\begin{proof}

By Lemma \ref{m2}, $G'$ is a matching covered bipartite graph. According to Lemma \ref{m5}, to prove that $G'$ is a minimal matching covered bipartite graph, it suffices to show that any nice cycle of $G'$ has no chords. Let $C'$ be a nice cycle of $G'$. Now we prove that $C'$ has no chords. Since $C'$ is a nice cycle of $G'$, $G'-V(C')$ has a perfect matching $M'$. Let $M=M'\cap E(G)$. We denote the two ends of $P_{k+r+1}$ by $u$ and $v$.

If$~E(C')\cap E(P_{k+r+1})=\emptyset$, then $C'$ is in $G$. If $M'\cap E(P_{k+r+1})$ does not cover $u$ and $v$, then $M$ is a perfect matching of $G-V(C')$, and thus $C'$ is a nice cycle of $G$. If $M'\cap E(P_{k+r+1})$~covers $u$ and $v$. Since $P_{k+r+1}$ is compatible with $P_{k+i}$~($1\leq i \leq r$), the internal vertices of $P_{k}[u,v]$ all have degree 2 in $G'$. Hence, $M\triangle E(P_{k}[u,v])$ is a perfect matching of $G-V(C')$, and thus $C'$ is a nice cycle of $G$. Since $G$ is a minimal matching covered bipartite graph, the cycle $C'$ has no chords by Lemma \ref{m5}.  

If $E(C')\cap E(P_{k+r+1})\neq \emptyset$, then $E(P_{k+r+1})\subseteq E(C')$. If $C'$ is the union of the path $P_{k}[u,v]$ and $P_{k+r+1}$, then $C'$ has no chords. If $E(C')\cap E(P_{k}[u,v])=\emptyset$, replace $P_{k+r+1}$ in $C'$ with $P_{k}[u,v]$, and denote the resulting cycle by $C''$. Let $M_{1}$ be a perfect matching of $P_{k}$, then $M\setminus(M_{1}\cap E(P_{k}[u,v]))$ is a perfect matching of $G-V(C'')$. Therefore, $C''$ is a nice cycle of $G$. Since $G$ is a minimal matching covered bipartite graph, the cycle $C''$ has no chords by Lemma \ref{m5}. Therefore, the nice cycle $C'$ also has no chords in $G'$.  

To sum up, any nice cycle in $G'$ has no chords. Therefore, $G'$ is a minimal matching covered bipartite graph. The proof is complete. 
\end{proof}

\section{Minimal matching covered bipartite graphs}

In this section, we give the upper bound of the spectral radius of minimal matching covered bipartite graphs and characterize the extremal graphs. Let $G$ be a minimal matching covered bipartite graph with a bipartite ear decomposition $C+P_{1}+P_{2}+\cdots+P_{k}$. We define the grade of ears in this ear decomposition. The ear $C$ is called a {\it 0-grade ear}. The ear $P_{1}$ is called a {\it 1-grade ear}. For ear $P_i$~$(i\geq 2)$, if one end of $P_i$ is an internal vertex of a $k$-grade ear and the other end is an internal vertex of a $l$-grade ear, where $k\leq l$, then $P_i$ is called a {\it $(l+1)$-grade ear}. The {\it grade number} of the graph $G$ is the highest level of all ears among all possible ear decompositions of $G$. Given an ear decomposition of the graph $G$, denote the set of all $i$-grade ears in $G$ by $\mathcal {P}^i$. Then the ear decomposition of $G$ with grade number $m$ can be expressed as~$G=C+ \mathcal {P}^{1}+ \mathcal {P}^{2}+\cdots+ \mathcal {P}^{m}$. Let $P^{m-1}$ be an ear in $\mathcal{P}^{m-1}$. If $P$ is an $m$-grade ear and has an end that is the internal vertex of $P^{m-1}$, then $P$ is called an $m$-grade ear on $P^{m-1}$. For the convenience of subsequent discussion, we introduce the following notation: let $x$ and $y$ be two vertices of $P^{m-1}$, and the set of all $m$-grade ears with both ends on $P^{m-1}[x,y]$ is denoted by $\mathcal{P}^{m}[x,y]$. 

In the sequel of this section, suppose that $G$ is a graph with the greatest spectral radius among all minimal matching covered bipartite graphs, and the grade number of $G$ is $m$.  Let $G^{*}$ be the graph obtained from $G$ by removing all the internal vertices of the $m$-grade ears on $P^{m-1}$. Let $G^{*}_{1}$ be the graph obtained from $G^{*}$ by removing the internal vertices of the ear $P^ {m-1}$. By Lemma \ref{mg}, $G^{*}$ and $G^{*}_{1}$ are minimal matching covered bipartite graphs. First, we study the compatibility of $m$-grade ears on $P^{m-1}$ and give the following lemma.        
\begin{lemma}\label{m12}
Let $u$ and $u'$ be two ends of $P^{m-1}$, $\mathcal{P}^{m}_{1}$ and $\mathcal{P}^{m}_{2}$ be any two families of parallel $m$-grade ears on $P^{m-1}$. Denote the ends of ears in $\mathcal{P}^{m}_{1}$ by $v$, $v'$, and the ends of ears in $\mathcal{P}^{m}_{2}$ by $w$, $w'$.
\begin{enumerate}[(i)]
\setlength{\itemsep}{-1ex}
\item If $|\{v,v'\} \cap \{w,w'\}|\neq\emptyset$, then $\{v,v'\}=\{w,w'\}$$\mathrm{;}$ 
\item If $|\{v,v'\} \cap \{w,w'\}|=\emptyset$, then the ears in $\mathcal{P}^{m}_{1}$ and $\mathcal{P}^{m}_{2}$ are compatible.  
\end{enumerate}
\end{lemma}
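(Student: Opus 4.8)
The plan is to argue each part by contradiction, in the end contradicting Lemma~\ref{m5}: either by producing a nice cycle of $G$ that carries a chord, or (after rerouting a segment of $P^{m-1}$, as in the proof of Lemma~\ref{m6}) by exhibiting an ear decomposition of $G$ in which some edge is a trivial ear. Throughout, fix an ear $Q_{1}\in\mathcal{P}^{m}_{1}$ with ends $v,v'$ and an ear $Q_{2}\in\mathcal{P}^{m}_{2}$ with ends $w,w'$. Since $\mathcal{P}^{m}_{1}$ and $\mathcal{P}^{m}_{2}$ are families of parallel $m$-grade ears on $P^{m-1}$, the ears $Q_{1},Q_{2}$, the remaining ears of these two families, and $P^{m-1}$ are pairwise internally disjoint; at least one of $v,v'$ and at least one of $w,w'$ lies in the interior of $P^{m-1}$; and, by minimality (in the spirit of the nonadjacency hypothesis of Lemma~\ref{m6}), the ends of each family lying on $P^{m-1}$ are nonadjacent there. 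Two facts from the paragraph preceding the lemma will be used repeatedly: the graph $G^{*}$ obtained by deleting the interiors of all $m$-grade ears on $P^{m-1}$ is a nice subgraph of $G$, hence minimal matching covered and $2$-connected by Lemma~\ref{mg}; and a perfect matching of $G$ minus a cycle assembled from ears and segments of $P^{m-1}$ can be obtained from a fixed perfect matching by taking symmetric differences with the interior paths of the unused parallel ears and with appropriate segments of $P^{m-1}$ (together with a perfect matching of $P^{m-1}$ itself), precisely the matching surgery performed in the proofs of Lemmas~\ref{m6} and~\ref{mx}.

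For part (i), assume $|\{v,v'\}\cap\{w,w'\}|=1$, say $v=w$ and $v'\neq w'$. Using that $G^{*}-v$ is connected, I would choose a $(v',w')$-path $R$ avoiding $v$ — when $v'$ and $w'$ both lie on $P^{m-1}$, taking $R$ to be the segment of $P^{m-1}$ between them that misses $v$, which is where the relative positions of $v,v',w'$ along $P^{m-1}$ enter. Then $D:=Q_{1}\cup R\cup Q_{2}$ is a cycle in which $v$ is an internal vertex of the subpath $Q_{1}\cup Q_{2}$; the matching surgery above produces a perfect matching of $G-V(D)$, so $D$ — or the variant obtained by rerouting $R$ along $P^{m-1}$ and re-balancing with a perfect matching of $P^{m-1}$, as in Lemma~\ref{mx} — is a nice cycle of $G$. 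But $v$, being an end of the $m$-grade ears and hence an internal vertex of $P^{m-1}$, has a third neighbour on $D$ along $P^{m-1}$, so the corresponding edge is a chord of a nice cycle, contradicting Lemma~\ref{m5}. The delicate point here is to choose $R$ so that this third edge really has both ends on $D$.

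For part (ii), assume $\{v,v'\}\cap\{w,w'\}=\emptyset$ but the two families are incompatible. Since a lone end of each family always ``appears in order'' on $P^{m-1}$, incompatibility forces at least three of $v,v',w,w'$ onto $P^{m-1}$, and after renaming within each pair I may assume $P^{m-1}$ meets these ends in the order $v,w,v'$, with $w$ strictly between the ends of $Q_{1}$ and the fourth end $w'$ placed accordingly. Parity of the cycles of the bipartite graph $G$ then fixes the parities of the segments $P^{m-1}[v,w]$, $P^{m-1}[w,v']$, $P^{m-1}[v',w']$, which in turn dictates which segment may be omitted from a cycle while leaving the residual pieces of $P^{m-1}$ of even order; using this, I would assemble from $Q_{1}$, $Q_{2}$ and the admissible segments of $P^{m-1}$ (substituting a path through $G^{*}$ for a segment if $w'$ sits off $P^{m-1}$) a cycle $D$ through $v,w,v'$ that omits one segment between two of these, and check — via the matching surgery plus a parity count on the omitted and residual pieces — that $D$ is nice. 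After possibly shrinking the configuration, an edge of the omitted segment is then a chord of $D$; if instead no chord materialises, rerouting $P^{m-1}$ through $Q_{1}$ and $Q_{2}$ exhibits a trivial ear of $G$. Either way Lemma~\ref{m5} is contradicted; degenerate interleavings in which two of the four ends coincide are covered by part (i).

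The step I expect to be the genuine obstacle, in both parts, is making the nice-cycle construction watertight: one must verify, case by case according to which of $v,v',w,w'$ lie on $P^{m-1}$ and which segments the cycle uses, that the prescribed symmetric differences really leave a perfect matching of $G-V(D)$ — this is delicate precisely because the residual pieces of $P^{m-1}$ (and of $G^{*}$) must have even order, which is what the parity analysis secures — and, simultaneously, guarantee that the resulting nice cycle carries an actual chord (a single edge), rather than merely an internally disjoint path, by choosing $R$ or the $P^{m-1}$-segments appropriately. The remaining ingredients — bipartiteness of the intermediate graphs, the parity arithmetic, and the invocations of Lemmas~\ref{m5}, \ref{mg}, \ref{m6} and~\ref{mx} — are routine.
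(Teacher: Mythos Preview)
Your approach diverges from the paper's in a fundamental way, and for part~(i) it overlooks the hypothesis that actually drives the argument. The lemma sits under the standing assumption (stated just before it) that $G$ has the \emph{greatest spectral radius} among all minimal matching covered bipartite graphs of its order, and that $m$ is its grade number. The paper's proof of~(i) uses the spectral hypothesis directly: assuming $v=w$, $v'\neq w'$ and, without loss of generality, $x_{v'}\ge x_{w'}$ in the Perron vector, one detaches the ears of $\mathcal{P}^{m}_{2}$ from $w'$ and reattaches them at $v'$; the resulting graph $G'$ is again minimal matching covered by Lemmas~\ref{mg} and~\ref{m6}, and Lemma~\ref{m11} yields $\rho(G')>\rho(G)$, contradicting the extremality of $G$. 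No nice cycle is ever built and Lemma~\ref{m5} is not invoked.

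Your plan for~(i) tries to extract the conclusion from minimality alone, via a nice cycle $D=Q_{1}\cup R\cup Q_{2}$ carrying a chord at $v$. The mechanism does not work as stated: for an edge of $P^{m-1}$ incident with $v$ to be a chord of $D$, a $P^{m-1}$-neighbour of $v$ must lie on $D$, hence on $R$ (since $D$ meets $P^{m-1}$ only at $v$ and along $R$). But $R$ must avoid $v$; if, say, both $v'$ and $w'$ lie on $P^{m-1}$ on the same side of $v$, the only $(v',w')$-segment of $P^{m-1}$ is $P^{m-1}[v',w']$, which misses both neighbours of $v$, and rerouting $R$ through $G^{*}$ gives you no control over whether it visits a neighbour of $v$ while still leaving $G-V(D)$ perfectly matchable. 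Since you never compare Perron entries or call on Lemma~\ref{m11}, you are in effect attempting to prove that \emph{every} minimal matching covered bipartite graph with grade number $m$ satisfies~(i) --- a strictly stronger statement, for which the chord you need is simply not guaranteed.

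For~(ii) the paper's argument is again different and essentially one line. If the two families are incompatible then, after relabelling, an end of the ears in $\mathcal{P}^{m}_{2}$ lies strictly inside $P^{m-1}[v,v']$. Replace $P^{m-1}$ in the decomposition by $P^{m-1}[u,v]+P^{m}_{11}+P^{m-1}[v',u']$: now $P^{m-1}[v,v']$ becomes an $m$-grade ear and the ears of $\mathcal{P}^{m}_{2}$ become $(m+1)$-grade, contradicting the definition of $m$ as the grade number of $G$. There is no nice cycle, no parity bookkeeping, no search for a trivial ear. Your route never invokes the grade-number hypothesis, which is exactly what makes~(ii) true.
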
  

\begin{proof}
Let $\mathcal{P}^{m}_{1}=\{P^{m}_{11}, P^{m}_{12},\ldots, P^{m}_{1l} \},~\mathcal{P}^{m}_{2}=\{P^{m}_{21}, P^{m}_{22},\ldots, P^{m}_{2l'}\},~l\geq1,~l'\geq1$. Assume, without loss of generality, that $v$ and $w$ are internal vertices of $P^{m-1}$ ($v',w'$~may or may not be on $P^{m-1}$).       

 $(i)$~Suppose that~$|\{v, v'\}\cap \{w, w'\}|\neq\emptyset$. Assume, without loss of generality, that $v=w$, $v'\neq w'$, and~$x_{v'}\geq x_{w'}$. Denote the neighbors of $w'$ on ears of $\mathcal{P}^{m}_{2}$ by $w_{1}',w_{2}',\ldots,w_{l'}'$. Let $G'=G-\sum\limits_{i=1}\limits^{l'}w'w_{i}'+\sum\limits_{i=1}\limits^{l'}v'w_{i}'$. By Lemma \ref{m2}, $G'$ is a matching covered bipartite graph. Remove the internal vertices of ears of $\mathcal{P}^{m}_{2}$ from $G$ to obtain a graph $G''$. By Lemma \ref{mg}, $G''$ is a minimal matching covered bipartite graph. The graph $G'$ is the union of $G''$ and parallel ears $\cup_{i=1}^{l'}(P^{m}_{2i}[w,w_{i}']+w_{i}'v')$. Then $G'$ is a minimal matching covered bipartite graph by Lemma \ref{m6}. By Lemma \ref{m11}, $\rho(G')> \rho(G)$, which contradicts the choice of $G$. 

$(ii)$~Suppose that $|\{v,v'\} \cap \{w,w'\}|=\emptyset$. Then the ends of ears in $\mathcal{P}^{m}_{1}$ and $\mathcal{P}^{m}_{2}$ are different. Recall that $u$ and $u'$ are two ends of $P^{m-1}$. Now we prove ears in $\mathcal{P}^{m}_{1}$ and $\mathcal{P}^{m}_{2}$ are compatible by contradiction. Suppose, without loss of generality, that at least one of the ends of ears in $\mathcal{P}^{m}_{2}$ is an internal vertex of $P^{m-1}[v, v']$, where $v$ is closer to $u$ than $v'$. Then, adjust $P^{m-1}+\mathcal{P}^{m}_{1}+\mathcal{P}^{m}_{2}$ in the ear decomposition of $G$ to $$(P^{m-1}[u,v]+P^{m}_{11}+P^{m-1}[v',u'])+\sum\limits_{i=2}\limits^{l}P^{m}_{1i}+P^{m-1}[v,v']+\mathcal{P}^{m}_{2},$$while keeping the other ears unchanged. We obtain a new ear decomposition of $G$. In this new ear decomposition, $P^{m-1}[u,v]+P^{m}_{11}+P^{m-1}[v',u']$ is an $(m-1)$-grade ear, $P^{m-1}[v,v']$ and $P^{m}_{1i}$ ($2\leq i\leq l$) are $m$-grade ears, $\mathcal{P}^{m}_{2}$ is a set of $(m+1)$-grade ears. This contradicts the assumption that the grade number of $G$ is $m$. The proof is complete.
\end{proof} 
Next, based on the distribution of the ends of ears on $P^{m-1}$, where $m\geq2$, we characterize minimal matching covered bipartite graphs with a greater spectral radius than $G$.

\begin{lemma}\label{a}
If the ends of all $m$-grade ears on $P^{m-1}$ are on $P^{m-1}$, then there exists a minimal matching covered bipartite graph $G'$ satisfying $\rho(G')>\rho(G)$. 
\end{lemma}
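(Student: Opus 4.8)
The plan is to use the Perron vector of $G$ to single out a pair of attachment vertices of largest weight on $P^{m-1}$ and then to re-route the remaining $m$-grade ears on $P^{m-1}$ so that they all become parallel ears with that pair as their common ends; concentrating edges onto two vertices raises the spectral radius, while Lemmas~\ref{m6} and \ref{mx} are tailored to guarantee that each such re-routing keeps the graph minimal matching covered bipartite. Since $G$ was chosen to have the greatest spectral radius, it suffices to produce one minimal matching covered bipartite graph $G'$ of order $n$ with $\rho(G')>\rho(G)$.

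First I would fix notation. Write $P^{m-1}$ as an odd path with ends $u,u'$, let $\mathbf{x}$ be the Perron vector of $G$, and let $\mathcal{P}^{m}_{1},\dots,\mathcal{P}^{m}_{s}$ be the families of parallel $m$-grade ears on $P^{m-1}$, with $\mathcal{P}^{m}_{i}$ having ends $a_{i},b_{i}$, all internal vertices of $P^{m-1}$ by hypothesis. By Lemma~\ref{m12}, any two of these families either share both ends or have their four ends appearing in non-crossing order along $P^{m-1}$, so after relabelling the segments $P^{m-1}[a_{i},b_{i}]$ form a non-crossing family. I then pick the family, say $\mathcal{P}^{m}_{1}$, for which $x_{a_{1}}+x_{b_{1}}$ is largest, and set $v=a_{1}$, $v'=b_{1}$; these lie in different parts of $G$ (an odd path is bipartite), and one checks via Lemma~\ref{m5} that $v$ and $v'$ are nonadjacent, since otherwise $vv'$ would be a chord of a nice cycle through $v$ and $v'$.

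The core step is this: for a family $\mathcal{P}^{m}_{i}$ with $\{a_{i},b_{i}\}\neq\{v,v'\}$, delete the internal vertices of all ears in $\mathcal{P}^{m}_{i}$ — by Lemma~\ref{mg} the result is again minimal matching covered bipartite — and reinsert the same internal paths as ears joining $v$ and $v'$, matching the two parts appropriately. By the compatibility conclusion of Lemma~\ref{m12} the new ears are compatible with the remaining $m$-grade ears on $P^{m-1}$, and their ends $v,v'$ are nonadjacent and in different parts, so Lemma~\ref{mx} (or Lemma~\ref{m6}) shows the resulting graph $G'$ is still minimal matching covered bipartite. For the inequality $\rho(G')>\rho(G)$: if $x_{v}\ge x_{a_{i}}$, then moving just the end-edges of $\mathcal{P}^{m}_{i}$ incident with $a_{i}$ over to $v$ already gives a strict increase by Lemma~\ref{m11}, with $T$ the set of internal neighbours of $a_{i}$ on these ears, $S_{2}=\{a_{i}\}$ and $S_{1}=\{v\}$; if instead $x_{v}<x_{a_{i}}$, then $x_{v'}>x_{b_{i}}$ by maximality of $x_{v}+x_{v'}$, so I first move the end-edges at $b_{i}$ and then those at $a_{i}$. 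Iterating over $i$ transforms $G$ into the required $G'$.

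The step I expect to be the real obstacle is not the spectral estimate but re-establishing minimality after each re-routing: at every stage one must exhibit an ear decomposition in which the freshly inserted ears sit on the last ordinary ear $P^{m-1}$ and are compatible with the family ears already present, so that the hypotheses of Lemma~\ref{mx} are met verbatim; this requires the non-crossing order to be preserved and the inserted ears' ends to remain nonadjacent and in opposite parts throughout the process. A secondary subtlety is the degenerate case in which every $m$-grade ear on $P^{m-1}$ is already parallel with ends $v,v'$: there the move above is vacuous, and one instead relocates the common pair $\{v,v'\}$ onto Perron-heavier vertices reached through $G^{*}_{1}$ (again via Lemmas~\ref{m11} and \ref{mx}), or re-chooses the ear decomposition so that this configuration no longer occurs.
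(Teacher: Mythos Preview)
Your plan has a genuine gap: the case you call ``secondary'' --- all $m$-grade ears on $P^{m-1}$ already forming a single parallel family with ends $v,v'$ --- is in fact the essential case, and you do not handle it. Your re-routing procedure produces no new graph in that situation, and your suggested fallback (``relocate the common pair $\{v,v'\}$ onto Perron-heavier vertices reached through $G^{*}_{1}$'') is not a construction: those heavier vertices lie off $P^{m-1}$, so the re-attached ears would no longer sit on a single last ear of an ear decomposition, and neither Lemma~\ref{m6} nor Lemma~\ref{mx} applies. Re-choosing the ear decomposition does not help either, since the grade $m$ was already chosen maximal. The paper's proof deals with exactly this single-family configuration and compares $(v,v')$ not with other ear-families but with the ends $(u,u')$ of $P^{m-1}$ itself: depending on the sign of $x_u-x_v$ and $x_{u'}-x_{v'}$ it either slides the ear-ends out to $u,u'$ (making them parallel to $P^{m-1}$), or restructures $P^{m-1}$ so that $v$ or $v'$ becomes the new end of the $(m-1)$-grade ear, in several subcases adding a fresh edge such as $uu'$ and invoking Lemma~\ref{m9} for the strict increase. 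None of this is captured by ``concentrate at the Perron-maximal pair''.

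There is also a misalignment in your non-degenerate step. You construct $G'$ by moving both ends of $\mathcal{P}^{m}_{i}$ to $(v,v')$, but your spectral argument via Lemma~\ref{m11} moves only one end at a time: when $x_v<x_{a_i}$ you move the $b_i$-end first and ``then those at $a_i$'', but after the first move the Perron vector has changed and you have no control over the second. If instead you stop after one move, the resulting ears have ends $(v,b_i)$ or $(a_i,v')$, sharing exactly one end with $\mathcal{P}^{m}_{1}$; such families are not compatible in the sense of Definition~2.2 (and indeed Lemma~\ref{m12}(i) says this configuration cannot occur in the extremal $G$), so the hypothesis of Lemma~\ref{mx} fails and you cannot conclude minimality of the intermediate graph.
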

\begin{proof}
Suppose $P^{m}_{1}, P^{m}_{2}, \ldots, P^{m}_{l}$ are ears of a family of $m$-grade parallel ears with ends $v$ and $v'$ on $P^{m-1}$. It is possible that $l=1$. Suppose, without loss of generality, among the ends of all $m$-grade ears on $P^{m-1}$, the distance between $v'$~and\,$u'$ is the shortest. Denote the neighbors of $v$ on the ears $P^ {m}_{1}, P^{m}_{2}, \ldots, P^{m}_{l}$ by $v_{1}, v_{2}, \ldots, v_{l}$, respectively, and the neighbors of $v'$ on the ears $P^ {m}_{1}, P^{m}_{2}, \ldots, P^{m}_{l}$ by $v_{1}', v_{2}', \ldots, v_{l}'$, respectively. Based on whether $u'$ is equal to $v'$, we discuss the following two cases.

\indent \textbf{Case\,1.} $u'=v'$. Since $P^{m}_{1}, P^{m}_{2}, \ldots, P^{m}_{l}$ are $m$-grade ears, we have $u\neq v.$

If $x_{u}\geq x_{v}$, let $G'=G-\sum\limits_{i=1}\limits^{l}vv_{i}+\sum\limits_{i=1}\limits^{l}uv_{i}$. By Lemma \ref{m11}, $\rho(G')>\rho(G)$. Now we prove that $G'$ is a minimal matching covered bipartite graph. Let $G_{1}'$ be the graph obtained from $G'$ by removing the internal vertices of ears in $\mathcal {P}^{m}[u,v]$. By Lemma \ref{m6}, $G_{1}'$ is a minimal matching covered bipartite graph. By Lemma \ref{m12}, the families of $m$-grade parallel ears in $\mathcal{P}^{m}[u,v]$ are pairwise compatible. Hence by Lemmas \ref{m6} and \ref{mx}, $G'=G_{1}'+\mathcal {P}^{m}[u,v]$ is a minimal matching covered bipartite graph.             

If $x_{u}<x_{v}$, let $\mathcal{U}$ be the set of neighbors of $u$ on $P^{m-1}$ and on the ears in $\mathcal {P}^{m}[u,u']$, one end of which is $u$, and let $$G'=G-\sum\limits_{w\in N_{G}(u)\backslash \mathcal{U}}uw+\sum\limits_{w\in N_{G}(u)\backslash \mathcal{U}}wv+uu'.$$ Then $\rho(G')> \rho(G)$ by Lemmas \ref{m9} and \ref{m11}. In $G'$, remove the internal vertices of ears in $\mathcal {P}^{m}[u,v]$, the internal vertices of $P^ {m}_{i}$ for $1\leq i\leq l$, the internal vertices of $P^{m-1}[u',v]$, and the internal vertices of $P^{m-1}[v,u]+uu'$, denote the resulting graph by $G_{1}'$. It is obvious that $G^{*}_{1}\cong G_{1}'$. Recall that $G^{*}_{1}$ is a minimal matching covered bipartite graph, so is $G_{1}'$. By Lemma \ref{mg}, $G^{*}_{1}+P^ {m-1}$ is a minimal matching covered bipartite graph. Since $G$ is a minimal matching covered bipartite graph and the length of $P^{m-1}[u',v]$ and $P^{m-1}[v,u]+uu'$ are at least 3, $G_{2}'=G_{1}'+P^{m-1}[u',v]+(P^{m-1}[v,u]+uu')+\sum\limits_{i=1}\limits^{l}P^{m}_{i}$ is a minimal matching covered  bipartite graph by Lemma \ref{m6}. By Lemma \ref{m12}, the families of $m$-grade parallel ears in $\mathcal{P}^{m}[u,v]$ are pairwise compatible. Combining Lemma \ref{m6} with Lemma \ref{mx}, we have $G'=G_{2}'+\mathcal {P}^{m}[u,v]$ is a minimal matching covered bipartite graph.                             

\indent \textbf{Case\,2.}~$u'\neq v'$. If $u=v$, it can be reduced to Case 1. It suffices to consider the case where $u\neq v$. At this time, both $v$ and $v'$ are internal vertices of $P^{m-1}$. Let $\mathcal{U}$ be the set of neighbors of $u$ on the ear $P^{m-1}$ and on the $m$-grade ears on $P^{m-1}$ with $u$ as an end. Let $u''$ be the neighbor of $u'$ on $P^{m-1}$ ($u''$ may be the same vertex as $v'$). We shall discuss the following four subcases based on the relative values of $x_{u}$ and $x_{v}$, as well as $x_{u'}$ and $x_{v'}$.       

If $x_{u}>x_{v}$ and $x_{u'}>x_{v'}$, let$$G'=G-\sum\limits_{i=1}\limits^{l}vv_{i}+\sum\limits_{i=1}\limits^{l}uv_{i}-\sum\limits_{i=1}\limits^{l}v'v_{i}'
+\sum\limits_{i=1}\limits^{l}u'v_{i}'.$$Then $\rho(G')>\rho(G)$ by Lemma \ref{m11}. Recall that $G^{*}$ is a minimal matching covered bipartite graph. In $G'$, remove the internal vertices of ears in $\mathcal {P}^{m}[u,v]$, the internal vertices of $u'v_{i}'+P_{i}^{m}[v_{i}',v_{i}]+v_{i}u$ for $2\leq i\leq l$, and the internal vertices of $P^{m-1}$, denote the resulting graph by $G_{1}'$. Then $G_{1}'$ is the graph obtained from $G^{*}$ by replacing the ear $P^{m-1}$ with $u'v_{1}'+P_{1}^{m}[v_{1}',v_{1}]+v_{1}u$. Hence, $G_{1}'$ is a minimal matching covered bipartite graph. Thus $G_{2}'=G_{1}'+P^{m-1}+\sum\limits_{i=2}\limits^{l}(u'v_{i}'+P^{m}_{i}[v_{i}',v_{i}]+v_{i}u)$ is a minimal matching covered bipartite graph by Lemma \ref{m6}. Lemma \ref{m12} implies that the $m$-grade parallel ears in $\mathcal{P}^{m}[u,v]$ are pairwise compatible. By Lemmas~\ref{m6}~and~\ref{mx}, $G'=G_{2}'+\mathcal {P}^{m}[u,v]$ is a minimal matching covered bipartite graph. 

If $x_{u}> x_{v}$ and $x_{u'}\leq x_{v'}$, let $v''$ be the neighbor of $v$ on $P^{m-1}[v,v']$, and $$G'=G-\sum\limits_{w\in N_{G}(u')\backslash \{u''\}}u'w+\sum\limits_{w\in N_{G}(u')\backslash \{u''\}}v'w-\sum\limits_{i=1}\limits^{l}vv_{i}
+\sum\limits_{i=1}\limits^{l}uv_{i}-vv''+uv''+vu'.$$ Then $\rho(G')> \rho(G)$ by Lemmas \ref{m9} and \ref{m11}. Next, we prove that $G'$ is a minimal matching covered bipartite graph. In $G'$, remove the internal vertices of the ears in $\mathcal {P}^{m}[u,v]$, the internal vertices of $P^{m-1}[v',v'']+v''u$, the internal vertices of $P^{m-1}[u,v]+vu'+P^{m-1}[u',v']$, and the internal vertices of $P_{i}^{m}[v',v_{i}]+v_{i}u$ for $1\leq i\leq l$, denote the resulting graph by $G_{1}'$. Clearly, $G_{1}'\cong G_{1}^{*}$, where the vertices $u$ and $u'$ in $G_{1}^{*}$ correspond to the vertices $u$ and $v'$ in $G_{1}'$, respectively. Recall that $G_{1}^{*}$ is a minimal matching covered bipartite graph, so is $G_{1}'$. Since $G$ is a minimal matching covered bipartite graph, the length of $P^{m-1}[v,v']$ is at least 3. Thus the length of $P^{m-1}[v',v'']+v''u$ is at least 3. Recall that $G^{*}=G_{1}^{*}+P^{m-1}$ is a minimal matching covered bipartite graph. Therefore,  $G_{2}'=G_{1}'+(P^{m-1}[v',v'']+v''u)$ is a minimal matching covered bipartite graph. It follows from Lemma \ref{m6} that$$G_{3}'=G_{2}'+(P^{m-1}[u,v]+vu'+P^{m-1}[u',v'])+\sum\limits_{i=1}\limits^{l}(P^{m}_{i}[v',v_{i}]+v_{i}u)$$ is a minimal matching covered bipartite graph. According to Lemma \ref{m6}, the $m$-grade parallel ears in $\mathcal{P}^{m}[u,v]$ are pairwise compatible. Combining Lemma \ref{m6} with Lemma \ref{mx}, we have $G'=G_{3}'+\mathcal {P}^{m}[u,v]$~is a minimal matching covered bipartite graph.                                   

If $x_ {u}\leq x_{v}$ and $x_{u'}> x_{v'}$, exchange the labels of $u$ and $u'$, as well as $v$ and $v'$. The proof is the same as the case $x_{u}> x_{v}$ and $x_{u'}\leq x_{v'}$. 

If $x_ {u}\leq x_{v}$ and $x_{u'}\leq x_{v'}$, let $$G'=G-\sum\limits_{w\in N_{G}(u)\backslash \mathcal{U}}uw+\sum\limits_{w\in N_{G}(u)\backslash \mathcal{U}}vw-\sum\limits_{s\in N_{G}(u') \backslash \{u''\}}u's+\sum\limits_{s\in N_{G}(u')\backslash \{u''\}}v's+uu'.$$Then $\rho(G')> \rho(G)$ by Lemmas \ref{m9} and \ref{m11}. We will prove that $G'$ is a minimal matching covered bipartite graph in the sequel. In $G'$, remove the internal vertices of ears in $\mathcal {P}^{m}[u,v]$, the internal vertices of $P_{i}^{m}$ for $1\leq i\leq l$, the internal vertices of $P^{m-1}[v,v']$, and the internal vertices of $P^{m-1}[v,u]+uu'+P^{m-1}[u',v']$, denote the resulting graph by $G_{1}'$. Clearly, $G_{1}'\cong G^{*}_{1}$, where the vertices $u$ and $u'$ in $G^{*}_{1}$ correspond to $v$ and $v'$ in $G_{1}'$, respectively. Recall that $G^{*}=G^{*}_{1}+P^{m-1}$ is a minimal matching covered bipartite graph. Since $G$ is a minimal matching covered bipartite graph, the length of $P^{m-1}[v,v']$ is at least 3. So $G_{2}'=G_{1}'+P^{m-1}[v,v']$ is a minimal matching covered bipartite graph. Hence by Lemma \ref{m6}, we have $$G_{3}'=G_{2}'+(P^{m-1}[v,u]+uu'+P^{m-1}[v',u'])+\sum\limits_{i=1}\limits^{l}P^{m}_{i}$$ is a minimal matching covered bipartite graph. According to Lemma \ref{m12}, the $m$-grade parallel ears in $\mathcal{P}^{m}[u,v]$ are pairwise compatible. By Lemmas \ref{m6} and \ref{mx}, $G'=G_{3}'+\mathcal {P}^{m}[u,v]$ is a minimal matching covered bipartite graph. The proof is complete.                     
\end{proof} 

\begin{lemma}\label{z}
If there is exactly one family of $m$-grade parallel ears on $P^{m-1}$ such that one end is an internal vertex of $P^{m-1}$ and the other end is not on $P^{m-1}$, then there exists a minimal matching covered bipartite graph $G'$ satisfying $\rho(G')> \rho(G)$.
\end{lemma}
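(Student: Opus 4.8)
The plan is to reuse the edge-exchange strategy of Lemma~\ref{a}: modify $G$ locally so as to relocate edges onto the endpoint of larger Perron weight, use Lemmas~\ref{m9} and~\ref{m11} to see that this strictly increases the spectral radius, and then certify that the resulting graph $G'$ is again a minimal matching covered bipartite graph of order $n$ by writing down a bipartite ear decomposition of $G'$ assembled from already-known minimal matching covered nice subgraphs plus ears, via Lemmas~\ref{mg}, \ref{m6} and~\ref{mx}, with compatibility supplied by Lemma~\ref{m12}. Such a $G'$ contradicts the choice of $G$, which proves the lemma.

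Fix notation: let $\mathcal{P}^{m}_{1}=\{P^{m}_{11},\dots,P^{m}_{1l}\}$ be the one family of $m$-grade parallel ears on $P^{m-1}$ having one end $v$ an internal vertex of $P^{m-1}$ and the other end $v'$ off $P^{m-1}$, so that $v'$ is an internal vertex of some ear $Q\neq P^{m-1}$ of grade at most $m-1$. Write $u,u'$ for the two ends of $P^{m-1}$ and $v_{i}'$ for the neighbour of $v'$ on $P^{m}_{1i}$. Since each $P^{m}_{1i}$ is odd, $v$ and $v'$ lie in different parts of $G$, and since $P^{m-1}$ is odd, $u$ and $u'$ lie in different parts; hence $v'$ is in the same part as exactly one of $u,u'$, say $u'$. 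By hypothesis every $m$-grade ear on $P^{m-1}$ other than those in $\mathcal{P}^{m}_{1}$ has both ends on $P^{m-1}$, and by Lemma~\ref{m12} these families are pairwise compatible and each is compatible with, or shares both ends with, $\mathcal{P}^{m}_{1}$.

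The argument then splits on the comparison of $x_{u'}$ and $x_{v'}$. If $x_{u'}\ge x_{v'}$, I set $G'=G-\sum_{i=1}^{l}v'v_{i}'+\sum_{i=1}^{l}u'v_{i}'$; applying Lemma~\ref{m11} with $T=\{v_{1}',\dots,v_{l}'\}$, $S_{1}=\{u'\}$, $S_{2}=\{v'\}$ gives $\rho(G')>\rho(G)$ (if some $u'v_{i}'$ is already an edge this is a degenerate position, to be handled separately using Lemma~\ref{m9}). In $G'$ the family $\mathcal{P}^{m}_{1}$ has been replaced by the parallel ears $P^{m}_{1i}[v,v_{i}']+v_{i}'u'$, whose common ends $v$ and $u'$ both lie on $P^{m-1}$ and in different parts; deleting the internal vertices of $\mathcal{P}^{m}_{1}$ from $G$ leaves a minimal matching covered bipartite graph (Lemma~\ref{mg}), and re-attaching this new family together with the unchanged $m$-grade ears of $P^{m-1}$ is legitimate by Lemmas~\ref{m6} and~\ref{mx} and the compatibility from Lemma~\ref{m12}, provided $v$ and $u'$ are non-adjacent in $G$ --- the case $vu'\in E(G)$, which forces $v$ to be the neighbour of $u'$ on $P^{m-1}$, being a separate degenerate position. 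If instead $x_{u'}<x_{v'}$, I reverse the roles exactly as in the $x_{u}<x_{v}$ subcase of Lemma~\ref{a}: with $u''$ the neighbour of $u'$ on $P^{m-1}$, I move onto $v'$ every edge at $u'$ other than $u'u''$ and the edges along the $m$-grade ears of $P^{m-1}$ ending at $u'$ (which are re-routed through $v'$), and add one new edge joining $u'$ to a vertex of the modified top ear in the opposite part so that the remaining piece of $P^{m-1}$ together with that edge again forms an ear. Then $\rho(G')>\rho(G)$ by Lemmas~\ref{m9} and~\ref{m11}, and $G'$ is shown minimal matching covered bipartite by the same peeling argument, starting from a copy of $G^{*}_{1}$ (or $G^{*}$) inside $G'$ and rebuilding $G'$ one ear at a time via Lemmas~\ref{m6}, \ref{mx} and~\ref{m12}.

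The main obstacle is the second half of each case: certifying that the modified graph is not merely matching covered but \emph{minimal}. Matching-coveredness follows at once from Lemma~\ref{m2} once a bipartite ear decomposition is written down; minimality requires that no nice cycle of $G'$ have a chord, and the only workable route is the inductive reconstruction --- recognize inside $G'$ an isomorphic copy of one of the already-established minimal matching covered bipartite graphs $G^{*}$, $G^{*}_{1}$ or $G''$, then rebuild $G'$ from it by successively adding a single nontrivial ear between non-adjacent vertices in different parts of the current top ear (Lemma~\ref{m6}) or a compatible family of parallel ears (Lemma~\ref{mx}). The delicate points are checking the non-adjacency hypotheses, done by placing the would-be forbidden chord on an explicitly exhibited nice cycle and invoking Lemma~\ref{m5}, and checking compatibility of the re-attached $m$-grade ears, which is precisely what Lemma~\ref{m12} provides; and, as in Lemma~\ref{a}, a few degenerate positions --- an exchange edge already present, or $v,v',u'$ in special positions along $P^{m-1}$ or $Q$ --- must be dealt with separately, typically by invoking Lemma~\ref{m9} to absorb an already-present edge.
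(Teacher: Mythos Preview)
Your plan for Case~1 ($x_{u'}\ge x_{v'}$) is essentially the paper's: the paper also sets $G'=G-\sum_{i}v'v_{i}'+\sum_{i}u'v_{i}'$, invokes Lemma~\ref{m11}, and then rebuilds $G'$ from a minimal matching covered core via Lemmas~\ref{m6}, \ref{mx}, \ref{m12}. The paper's rebuilding is more delicate than your sketch suggests---it first swaps in $P^{m-1}[u,v]+P^{m}_{11}[v,v_{1}']+v_{1}'u'$ as the new $(m-1)$-grade ear inside $G^{*}$, and only then re-attaches $P^{m-1}[v,u']$, the remaining $P^{m}_{1i}$'s, and the two batches $\mathcal P^{m}[u,v]$, $\mathcal P^{m}[v,u']$ in a specific order---but your outline is compatible with this.

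Your Case~2 ($x_{u'}<x_{v'}$), however, has a real gap. You propose the analogue of the $x_{u}<x_{v}$ subcase of Lemma~\ref{a}: move all of $u'$'s ``outside'' edges onto $v'$ and then add an auxiliary edge, hoping to recognise a copy of $G^{*}_{1}$ (or $G^{*}$) inside $G'$. But that analogy breaks because here $v'$ is \emph{not} on $P^{m-1}$: it is an internal vertex of some other ear $Q$ of grade at most $m-1$, and after your swap $v'$ keeps its two $Q$-neighbours \emph{in addition to} the outside edges just transplanted from $u'$. In the purported copy of $G^{*}_{1}$ the vertex playing the role of $u'$ must carry exactly the outside edges and nothing else, so no such copy exists; the peeling-and-reattaching argument you describe does not get off the ground.

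The paper's Case~2 avoids this altogether by moving edges in the \emph{opposite} direction: with $u''$ the neighbour of $u'$ on $P^{m-1}$ and $\mathcal U'$ the neighbours of $u'$ on those $m$-grade ears in $\mathcal P^{m}[v,u']$ that end at $u'$, it sets
\[
G'=G-\sum_{w\in\mathcal U'}u'w+\sum_{w\in\mathcal U'}v'w-u'u''+v'u''.
\]
Thus $u'$ keeps all its outside connections, while the segment $P^{m-1}[v,u']$ is rerouted into the new ear $P^{m-1}[v,u'']+u''v'$ with ends $v$ and $v'$, parallel to $\mathcal P^{m}_{1}$. The base for the reconstruction is then $G''=G$ minus the internal vertices of $\mathcal P^{m}[v,u']$ and of $P^{m-1}[v,u']$ (minimal matching covered by Lemma~\ref{mg} after adjusting the ear decomposition so that $(P^{m-1}[u,v]+P^{m}_{11})$ is the $(m-1)$-grade ear), and one adds first the single ear $P^{m-1}[v,u'']+u''v'$ (via Lemma~\ref{mx}, compatible with $\mathcal P^{m}[u,v]$) and then the re-routed $\mathcal P^{m}[v,u']$. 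This is both simpler than your proposal and sidesteps the obstruction above.
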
 
\begin{proof}
Let $P^{m}_{1},P^{m}_{2},\ldots,P^{m}_{l}$ be the members of the family of $m$-grade parallel ears on $P^{m-1}$ with ends $v$ and $v'$, where $v$ is on $P^{m-1}$ and $v'$ is not on $P^{m-1}$. Recall that $u$ and $u'$ are the ends of $P^{m-1}$. Assume, without loss of generality, that $u$ and $v$ are in one part of $G$, then $u'$ and $v'$ are in other part. Since $G$ is a minimal matching covered bipartite graph, $v$ and $u'$ are nonadjacent by Lemma \ref{mg}. Denote the neighbor of $u'$ on $P^{m-1}$ by $u''$, and the neighbors of $v'$ on $P^{m}_{1},P^{m}_{2},\ldots,P^{m}_{l}$ by $v_{1}',\ldots,v_{l}'$, respectively. We consider the following two cases according to the relative values of $x_{u'}$ and $x_{v'}$.

\indent \textbf{Case\,1.} $x_{u'}\geq x_{v'}$. Let $G'=G-\sum\limits_{i=1}\limits^{l}v'v_{i}'+\sum\limits_{i=1}\limits^{l}u'v_{i}'$. Then $\rho(G')> \rho(G)$ by Lemma \ref{m11}. Now we prove that $G'$ is a minimal matching covered bipartite graph. The ear decomposition of $G'$ is obtained from the ear decomposition of $G$ by adjusting $P^{m-1}+\sum\limits_{i=1}\limits^{l}P^{m}_{i}$ to $P^{m-1}+\sum\limits_{i=1}\limits^{l}(P^{m}_{i}[v,v_{i}']+v_{i}'u')$. In the ear decomposition of $G'$, adjust $P^{m-1}+\mathcal {P}^{m}[u,u']+\sum\limits_{i=1}\limits^{l}(P^{m}_{i}[v,v_{i}']+v_{i}'u')$ to $(P^{m-1}[u,v]+P^{m}_{1}[v,v_{1}']+v_{1}'u')+\sum\limits_{i=2}\limits^{l}(P^{m}_{i}[v,v_{i}']+v_{i}'u')+P^{m-1}[v,u']+\mathcal {P}^{m}[u,u']$. Recall that $G^{*}$ is obtained from $G$ by removing all internal vertices of the ear $P^{m}_{i}$ for $1\leq i\leq l$ and all internal vertices of ears in $\mathcal{P}^{m}[u,u']$ and $G^{*}$ is a minimal matching covered bipartite graph. In $G'$, remove all internal vertices of the $m$-grade ears in $\mathcal{P}^{m}[u,u']$, all internal vertices of $P^{m-1}[v,u']$, and the internal vertices of $P^{m}_{i}[v,v_{i}']+v_{i}'u'$ for $2\leq i\leq l$, denote the resulting graph by $G_{1}'$. Note that $G_{1}'$ can be obtained from $G^{*}$ by replacing $P^{m-1}$ with $P^{m-1}[u,v]+P^{m}[v,v_{1}']+v_{1}'u'$. Then $G_{1}'$ is a minimal matching covered bipartite graph. By Lemma \ref{m12}, the ears in $\mathcal {P}^{m}[u,v]$ and $P^{m-1}[v,u']$ are compatible. According to Lemmas \ref{m6} and \ref{mx}, $G_{2}'=G_{1}'+P^{m-1}[v,u']+\mathcal {P}^{m}[u,v]$ is a minimal matching covered bipartite graph. Hence by Lemma \ref{m6}, $G_{3}'=G_{2}'+\sum\limits_{i=2}\limits^{l}(P^{m}_{i}[v,v_{i}']+v_{i}'u')$~is a minimal matching covered bipartite graph. Since the $m$-grade parallel ears on $P^{m-1}[v,u']$ are pairwise compatible, $G'=G_{3}'+\mathcal {P}^{m}[v,u']$ is a minimal matching covered bipartite graph.                              

\indent\textbf{Case\,2.}\,$x_{u'}< x_{v'}$. If there are ears with $u'$ as an end in $\mathcal {P}^{m}[v,u']$, then these ears have the same ends by Lemma \ref{m12}. Let $\mathcal{U'}$ be the set of neighbors of $u'$ on these ears. It is possible that $\mathcal{U'}=\emptyset$. Let $$G'=G-\sum\limits_{w\in \mathcal{U'}}u'w+\sum\limits_{w\in \mathcal{U'}}v'w-u'u''+v'u''.$$ Then by Lemma \ref{m11}, $\rho(G')> \rho(G)$. Next, we prove that $G'$ is a minimal matching covered bipartite graph. Adjust $P^{m-1}+\sum\limits_{i=1}\limits^{l}P^{m}_{i}$ in the ear decomposition of $G$ to $(P^{m-1}[u,v]+P^{m}_{1})+\sum\limits_{i=2}\limits^{l}P^{m}_{i}+P^{m-1}[v,u']$. Then in the new ear decomposition of $G$, $P^{m-1}[v,u']$ is an ear. Remove the internal vertices of ears in $\mathcal {P}^{m}[v,u']$ and the internal vertices of $P^{m-1}[v,u']$ from $G$, and denote the resulting graph by $G''$. By Lemma \ref{mg}, $G''$ is a minimal matching covered bipartite graph. Since the ears in $\mathcal{P}^{m}[u,v]$ are compatible with the ear $P^{m-1}[v,u'']+u''v'$, by Lemma \ref{mx}, $G_{1}'=G''+(P^{m-1}[v,u'']+u''v')$ is a minimal matching covered bipartite graph. By Lemma \ref{m12}, the $m$-grade parallel ears on $P^{m-1}[v,u']$ are compatible. Combining Lemma \ref{m6} with Lemma \ref{mx}, we have $G'=G_{1}'+\mathcal {P}^{m}[v,u']$ is a minimal matching covered bipartite graph. The proof is complete.                        
\end{proof}

\begin{lemma}\label{n}
If there are exactly two families of $m$-grade parallel ears on $P^{m-1}$ such that each has an end being an internal vertex of $P^{m-1}$ and the other is not on $P^{m-1}$, then there exists a minimal matching covered bipartite graph $G'$ satisfying $\rho(G')> \rho(G)$.    
\end{lemma}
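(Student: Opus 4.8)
The plan is to run, on each of the two free ends, the same dichotomy that drives the proof of Lemma~\ref{z}, and to fuse the two resulting edge-exchanges into one transformation. Write $\mathcal{P}^{m}_{1}=\{Q_{1},\ldots,Q_{l_{1}}\}$ and $\mathcal{P}^{m}_{2}=\{R_{1},\ldots,R_{l_{2}}\}$ for the two families, where every $Q_{j}$ has ends $v_{1}$ and $v_{1}'$ with $v_{1}$ an internal vertex of $P^{m-1}$ and $v_{1}'\notin V(P^{m-1})$, and every $R_{j}$ has ends $v_{2}\in V(P^{m-1})$ and $v_{2}'\notin V(P^{m-1})$; let $u,u'$ be the ends of $P^{m-1}$. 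Since $\mathcal{P}^{m}_{1}\neq\mathcal{P}^{m}_{2}$, Lemma~\ref{m12}$(i)$ gives $\{v_{1},v_{1}'\}\cap\{v_{2},v_{2}'\}=\emptyset$ and Lemma~\ref{m12}$(ii)$ gives that the two families are compatible; in particular $v_{1}\neq v_{2}$, so I may assume $v_{1}$ lies on $P^{m-1}[u,v_{2}]$. For each family I single out the end $t_{1}$ (resp.\ $t_{2}$) of $P^{m-1}$ lying in the same part of the bipartition as $v_{1}'$ (resp.\ $v_{2}'$); since $G$ is bipartite this is the end of $P^{m-1}$ onto which the edges joining $v_{1}'$ (resp.\ $v_{2}'$) to the rest of its own family can be redirected. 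One checks that $t_{1}=t_{2}$ precisely when $v_{1}$ and $v_{2}$ lie in the same part, and $t_{1}\neq t_{2}$ otherwise; both situations must be treated.

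The core step is a case split on the Perron weights $x_{v_{1}'},x_{v_{2}'}$ against $x_{t_{1}},x_{t_{2}}$. When $x_{v_{i}'}\leq x_{t_{i}}$, I would, as in Case~1 of Lemma~\ref{z}, transfer the edges joining $v_{i}'$ to the internal neighbours of its own family onto $t_{i}$ (adding, if necessary, one ``$P^{m-1}$-type'' edge to preserve a path structure); by Lemmas~\ref{m9} and~\ref{m11} this strictly increases $\rho$, and the $i$-th family then has both ends on $P^{m-1}$, so that the minimality of $G'$ can be re-established by the ear-decomposition bookkeeping of Lemma~\ref{a}. When $x_{v_{i}'}>x_{t_{i}}$, I would instead, as in Case~2 of Lemma~\ref{z}, slide the end $t_{i}$ of $P^{m-1}$ into $v_{i}'$, i.e.\ transfer the $m$-grade-ear edges and the $P^{m-1}$-edge incident to $t_{i}$ over to $v_{i}'$, so that $v_{i}'$ becomes the corresponding end of a modified $(m-1)$-grade ear; again $\rho$ increases by Lemmas~\ref{m9} and~\ref{m11}. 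If $t_{1}=t_{2}$ and both free ends are heavy, that single end can be slid into only one of them, so I would slide it into the one of larger Perron weight, say $v_{1}'$, and then, with $v_{1}'$ now playing the role of that end, handle $v_{2}'$ against $x_{v_{1}'}$ by the same dichotomy (here the inequality $x_{v_{1}'}\geq x_{v_{2}'}$ supplies the weight condition of Lemma~\ref{m11} for the second redirection). In every case one obtains a graph $G'$ with $\rho(G')>\rho(G)$, contradicting the choice of $G$.

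The remaining and, I expect, main work is to confirm that $G'$ is still a minimal matching covered bipartite graph. By Lemma~\ref{m2} it is matching covered bipartite, so by Lemma~\ref{m5} it suffices to show that no nice cycle of $G'$ has a chord. As in Lemmas~\ref{a} and~\ref{z}, the strategy is to peel $G'$ down to a copy of $G^{*}$ or $G^{*}_{1}$ (minimal matching covered bipartite by Lemma~\ref{mg}) and then rebuild it ear by ear---first the modified $(m-1)$-grade ear, then the reattached $m$-grade ears of the two families, then the pre-existing ears of $\mathcal{P}^{m}[u,u']$---invoking Lemma~\ref{m6} for single non-trivial ears and Lemma~\ref{mx}, together with the compatibility furnished by Lemma~\ref{m12}, for families of parallel ears. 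The delicate point---the expected obstacle---is the configuration $t_{1}=t_{2}$ with both free ends heavy: there a single sliding move must reattach the ears of both families as parallel ears along the new path while keeping all of them compatible with one another and with the old ears of $\mathcal{P}^{m}[u,u']$, and one must simultaneously rule out a chord of a nice cycle, a $4$-cycle (forbidden by Lemma~\ref{m4}), and a multiple edge. Writing down a correct bipartite ear decomposition of $G'$ in this case, and then checking chordlessness of each of its nice cycles, is lengthy but, as in Lemma~\ref{a}, essentially routine.
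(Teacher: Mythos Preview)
Your plan takes a genuinely different route from the paper, and the difference is not cosmetic. The paper does \emph{not} run the Lemma~\ref{z} dichotomy on both free ends. Instead it first exploits the grade-$m$ hypothesis structurally: by rearranging the ear decomposition (for instance replacing $P^{m-1}$ by $P^{m-1}[u,w]+P^{m}_{11}$ and treating $P^{m-1}[w,u']$ as a new ear), it shows that in two of the four possible bipartition configurations of the six endpoints the ears of $\mathcal{P}^{m}_{2}$ are forced to have grade at least $m+1$, contradicting the choice of $m$; a third configuration reduces by the same device directly to the hypothesis of Lemma~\ref{z}. In the single surviving configuration another rearrangement turns $P^{m-1}[v,u']$ into an $m$-grade ear, so by maximality of $m$ it carries no further ears; one comparison $x_{u'}$ versus $x_{v'}$ then finishes, with a two-line minimality check via Lemma~\ref{mx}. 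Thus the paper never faces a double dichotomy at all.

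Your plan never invokes the grade constraint, and this is exactly why the obstacle you flag is real rather than routine. Without the observation that segments such as $P^{m-1}[v,u']$ are ear-free, your rebuild of $G'$ has to thread both reattached families through all of $\mathcal{P}^{m}[u,u']$ simultaneously; Lemma~\ref{m12} certifies compatibility only in $G$, so the hypotheses of Lemma~\ref{mx} must be re-earned after each exchange, and you have not done this. In the $t_{1}=t_{2}$ double-heavy case your two-step slide changes which vertex plays the role of the end of the $(m-1)$-grade ear before the second dichotomy is run, yet you have not argued that the second family remains a family of parallel ears on the new ear, nor that the ears moved to $v_{1}'$ stay compatible with those in $\mathcal{P}^{m}[u,u']$. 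The paper's approach sidesteps all of this by killing three of the four configurations before any edge-exchange is attempted; yours buys no simplification and leaves the hardest sub-case as an unproved assertion.
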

\begin{proof}
Suppose there are two families of $m$-grade parallel ears on $P^{m-1}$, say,
$$\mathcal{P}^{m}_{1}=\{P^{m}_{11},P^{m}_{12},\ldots,P^{m}_{1l}\},~\mathcal{P}^{m}_{2}=\{P^{m}_{21},P^{m}_{22},\ldots,P^{m}_{2l'}\}, where~ l\geq1,~ l'\geq1.$$
Let the ends of the ears in $\mathcal{P}^{m}_{1}$ be $w$ and $w'$, and the ends of the ears in $\mathcal{P}^{m}_{2}$ be $v$ and $v'$, where $w$ and $v$ are on the ear $P^{m-1}$, $w'$ and $v'$ are not on the ear $P^{m-1}$. Then for any other $m$-grade ears on $P^{m-1}$ (if they exist), both ends are on $P^{m-1}$. Assume, without loss of generality, that the distance from $w$ to $u$ is less than the distance from $v$ to $u$. Note that $G$ is bipartite. The structures of $P^{m-1}$, $\mathcal{P}^{m}_{1}$\,and\,$\mathcal{P}^{m}_{2}$ can be divided into four different cases according to the parts of $G$ that $u, u', v, v', w, w'$ belong to, see Figure 3. Since $P^{m-1}$ is $(m-1)$-grade, at least one of the ends $u$ and $u'$ is an inner vertex of an $(m-2)$-grade ear, say $u$.                
\begin{figure}[htbp]
  \centering
  \includegraphics[width=1\textwidth]{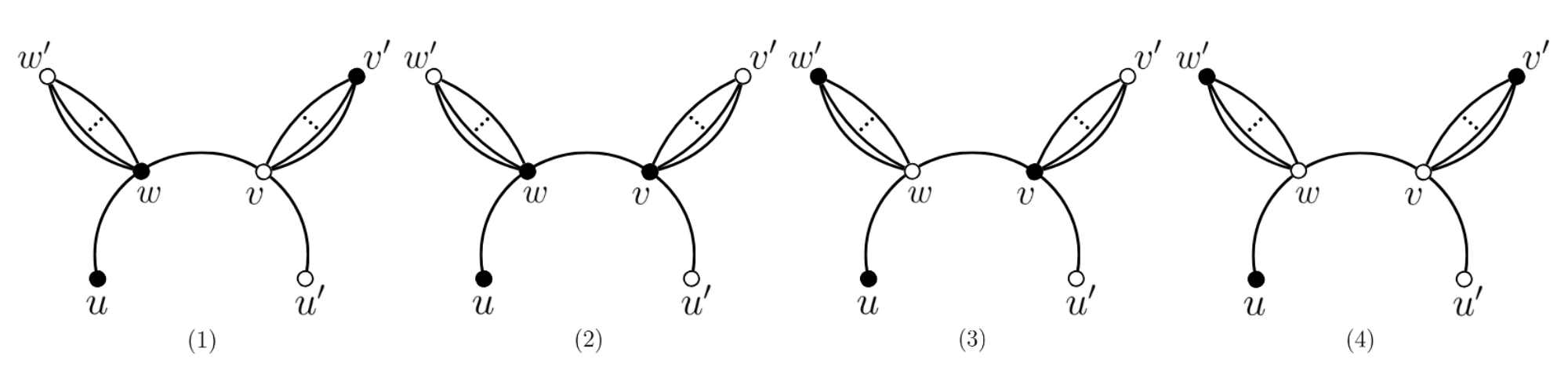}\\
  \caption{the local structures of $P^{m-1}$, $\mathcal{P}^{m}_{1}$\,and\,$\mathcal{P}^{m}_{2}$\,}
\end{figure}

If the structures of $P^{m-1}$, $\mathcal{P}^{m}_{1}$\,and\,$\mathcal{P}^{m}_{2}$ are as shown in Figure 3(1) and 3(2), then $u$~and~$w'$ belong to different parts of $G$ as well as $w$ and $u'$. Adjust $P^{m-1}+\mathcal{P}^{m}_{1}+\mathcal{P}^{m}_{2}$ in the original ear decomposition of $G$ to $(P^{m-1}[u,w]+P^{m}_{11})+\mathcal{P}^{m}_{1}\setminus \{P^{m}_{11}\}+P^{m-1}[w,u']+\mathcal{P}^{m}_{2}$. In the new ear decomposition of $G$, the grade of ear $P^ {m-1}[u,w]+P^{m}_{11}$ is at least $m-1$, and the grade of ear $P^{m-1}[w,u']$ is at least $m$. Therefore, the grade of ears in $\mathcal{P}^{m}_{2}$ is at least $m+1$. This contradicts the fact that the grade of $G$ is $m$. Therefore, the case shown in Figure 3(1) and 3(2) do not exist.            

If the structures of $P^{m-1}$, $\mathcal{P}^{m}_{1}$\,and\,$\mathcal{P}^{m}_{2}$ are as shown in Figure 3(4), adjust $P^{m-1}+\mathcal{P}^{m}_{1}+\mathcal{P}^{m}_{2}$ in the original ear decomposition of $G$ to $(P^{m-1}[u',v]+P^{m}_{21})+\mathcal{P}^{m}_{2}\setminus \{P^{m}_{21}\}+P^{m-1}[u,v]+\mathcal{P}^{m}_{1}$. In the new decomposition of $G$, among all the ears with ends on $P^{m-1}[u,v]$, there is exactly one family of parallel ears $\mathcal{P}^{m}_{1}$ satisfying one end is an internal vertex of $P^{m-1}[u,v]$ and the other end is not on $P^{m-1}[u,v]$. By Lemma \ref{z}, there exists a minimal matching covered bipartite graph $G'$ with greater spectral radius than that of $G$.          

If the structures of $P^{m-1}$, $\mathcal{P}^{m}_{1}$\,and\,$\mathcal{P}^{m}_{2}$ are as shown in Figure 3(3), then $u'$ and $v'$ are in one part of $G$, and $u$ and $w'$ are in the other part. Adjust $P^{m-1}+\mathcal{P}^{m}_{1}+\mathcal{P}^{m}_{2}$ in the ear decomposition of $G$ to $(P^{m-1}[u,v]+P^{m}_{21})+\mathcal{P}^{m}_{2}\setminus \{P^{m}_{21}\}+\mathcal{P}^{m}_{1}+P^{m-1}[v,u']$. In the new ear decomposition of $G$, $P^{m-1}[u,v]+P^{m}_{21}$ is an ear of $(m-1)$-grade, and $P^{m-1}[v,u']$ is an ear of $m$-grade. Since the grade of $G$ is $m$, there are no ears on $P^{m-1}[v,u']$. Since $G$ is a minimal matching covered bipartite graph, $u'$ and $v$ are nonadjacent. Denote the neighbor of $u'$ on the ear $P^{m-1}$ by $u''$, and the set of neighbors of $v'$ on $\mathcal{P}^{m}_{2}$ by $\{v_{1}',v_{2}',\ldots,v_{l'}'\}$. We will discuss the following two cases according to the relative values between $x_{u'}$~and~$x_{v'}$.              

If $x_{u'}\geq x_{v'}$, let $G'=G-\sum\limits_{i=1}\limits^{l'}v'v_{i}'+\sum\limits_{i=1}\limits^{l'}u'v_{i}'$. Then $\rho(G')> \rho(G)$ from Lemma \ref{m11}. Now we prove that $G'$ is a minimal matching covered bipartite graph. Remove the internal vertices of ears in $\mathcal{P}^{m}_{2}$ from $G$, and denote the resulting graph by $G''$. By Lemma \ref{mg}, $G''$ is a minimal matching covered bipartite graph. Note that $\cup_{i=1}^{l'}{P}^{m}_{2i}[v,v_{i}']+v_{i}'u'$ is compatible with $\mathcal{P}^{m}_{1}$, and the families of parallel ears of $\mathcal {P}^{m}[u,v]$. By Lemma \ref{mx}, $G'=G''+\sum\limits_{i=1}\limits^{l'}({P}^{m}_{2i}[v,v_{i}']+v_{i}'u')$ is a minimal matching covered bipartite graph.  

If $x_{u'}< x_{v'}$, let $G'=G-u''u'+u''v'$. Then $\rho(G')> \rho(G)$. Next, we prove that $G'$ is also a minimal matching covered bipartite graph. Recall that $P^{m-1}[v,u']$ is the last ear in the new ear decomposition of $G$ described above. Remove the internal vertices of $P^{m-1}[v,u']$ from $G$, and denote the resulting graph by $G''$. By Lemma \ref{mg}, $G''$ is a minimal matching covered bipartite graph. Since ${P}^{m-1}[v,u'']+u''v'$ is compatible with $\mathcal{P}^{m}_{1}$, and the families of parallel ears in $\mathcal {P}^{m}[u,v]$, by Lemma \ref{mx}, $G'=G''+{P}^{m-1}[v,u'']+u''v'$ is a minimal matching covered bipartite graph. The proof is complete. 
\end{proof} 
\vspace{0.2cm}                        

Now, we are in a position to present the proof of Theorem 1.1. 
\vspace{0.2cm}

\noindent\textbf{Proof of Theorem 1.1.}
Let $G$ be the graph with the greatest spectral radius among minimal matching covered bipartite graphs of order $n$. If $n=2$, then $G\cong K_{2}$ and $\rho(G)=1\leq 2$. If $n=4$, then $G\cong C_{4}$ and $\rho(G)=2$. If $n=6$, then the nice cycle of $G$ has no chords since $G$ is a minimal matching covered bipartite graph. Then one can check that $G\cong C_{6}$ and $\rho(G)=2$. We now consider the case when $n\geq 8$ in the sequence.

First, we prove that $G$ is not a cycle. By contradiction and suppose that $G$ is a cycle, and denote the vertices on the cycle by $v_{1}, v_{2}, \ldots, v_{n}$ in order. Let $G'=G-v_ {1}v_{2}+v_{1}v_{4}+v_{2}v_{n-1}$. Then $G'$ is the union of a cycle of length 6 and an ear $v_{4}v_{5}\cdots v_{n-1}$ of length at least 3. By Lemma \ref{m2}, $G'$ is a matching covered bipartite graph. It can be check that each cycle in $G'$ has no chords. Hence by Lemma \ref{m5}, $G'$ is a minimal matching covered bipartite graph. Let \textbf{x} be the Perron vector of $G$. By Lemma \ref{m10}, $x_ {v_{1}}=x_{v_{2}}=\cdots =x_{v_{n}}$. According to Lemmas \ref{m9} and \ref{m11}, $\rho(G')> \rho(G'-v_2v_{n-1})>\rho(G)$, which contradicts the choice of $G$. Therefore, $G$ is not a cycle.                     

By Lemma \ref{m2}, $G$ has bipartite ear decompositions. Suppose that the grade of $G$ is $m$. Then the ear decomposition of $G$ with grade $m$ can be expressed as $G=C+ \mathcal {P}^{1}+ \mathcal {P}^{2}+\cdots+ \mathcal{P}^{m}$, where $\mathcal {P}^{i}$ is the set of all $i$-grade ears of $G$, $1\leq i\leq m$. 

\indent\textbf{Claim~1.} $m=1$.    

Suppose that $m\geq 2$. Let $P^{m-1}$ be an $(m-1)$-grade ear of $G$ with ends $u$ and $u'$. Let $k$ be the number of families of $m$-grade parallel ears such that for each family, one end of ears in it is an internal vertex of $P^{m-1}$ and the other end is not on $P^{m-1}$. We will prove by induction on $k$ that there exists a minimal matching covered bipartite graph $G'$ satisfying $\rho(G')> \rho(G)$, that is a contradiction to the choice of $G$.      

If $k=0, 1, 2$, then there exists a minimal matching covered bipartite graph $G'$ satisfying $\rho(G')> \rho(G)$ by Lemmas \ref{a}, \ref{z} and \ref{n}. Hence we only need to consider the case when $k\geq3$. By Lemma \ref{m12}, these $k$ families of parallel ears are pairwise compatible. Denote the family of parallel ears whose ends are closest to $u$ by $\mathcal{P}^{m}_{1}$, and the family of parallel ears whose ends are closest to $u'$ by $\mathcal{P}^{m}_{2}$. Let$$\mathcal{P}^{m}_{1}=\{P^{m}_{11},P^{m}_{12},\ldots,P^{m}_{1l}\},~\mathcal{P}^{m}_{2}=\{P^{m}_{21},P^{m}_{22},\ldots,P^{m}_{2l'}\},~l\geq1,~l'\geq1.$$ 
Let the ends of ears in $\mathcal{P}^{m}_{1}$ be $w$ and $w'$, and those in $\mathcal{P}^{m}_{2}$ be $v$ and $v'$. Depending on the parts $u$, $u'$, $w$, $w'$, $v$, $v'$ are in, there are four cases of the local structures of $P^{m-1}$, ears in $\mathcal{P}^{m}_{1}$\,and\,$\mathcal{P}^{m}_{2}$\,in $G$, as shown in Figure 3.

If the local structures of $P^{m-1}$, ears in $\mathcal{P}^{m}_{1}$~and~$\mathcal{P}^{m}_{2}$ in $G$ are as shown in Figure 3(1). Similar to the argument in the proof of Lemma \ref{n}, we can adjust the ear decomposition of $G$ so that the grade of new ear decomposition is greater than $m$. This contradicts the fact that the grade of $G$ is $m$.  
            
If the local structures of $P^{m-1}$, ears in $\mathcal{P}^{m}_{1}$~and~$\mathcal{P}^{m}_{2}$ in $G$ are as shown in Figure 3(2) and Figure 3(4). By symmetry, it suffices to consider the case in Figure 3(2). We adjust the ear decomposition of $G$: replacing $P^ {m-1}+\mathcal{P}^{m}_{1}+\mathcal{P}^{m}_{2}$ with $P^{m-1}[u,w]+P^{m}_{11}+\mathcal{P}^{m}_{1}\setminus \{P^{m}_{11}\}+P^{m-1}[w,u']+\mathcal{P}^{m}_{2}$. In the new decomposition of $G$, there are $k-1$ families of parallel ears on $P^{m-1}[w,u']$ that have one end being an internal vertex of $P^{m-1}[w,u']$ and the other end not on $P^{m-1}[w,u']$, and from the choice of $G$, we know that the grade of $P^{m-1}[w,u']$ is at most $m-1$. According to the inductive hypothesis, there is a minimal matching covered bipartite graph $G'$ satisfying $\rho(G')> \rho(G)$.     

If the local structures of $P^{m-1}$, ears in $\mathcal{P}^{m}_{1}$~and~$\mathcal{P}^{m}_{2}$ in $G$ are as shown in Figure 3(3). Note that there are $k-2$ families of $m$-grade parallel ears on $P^{m-1}[w,v]$ that has one end being an internal vertex of $P^{m-1}[w,v]$ and the other end not on $P^{m-1}[w,v]$. Among these $k-2$ families of $m$-grade parallel ears, arbitrarily select one family and denote it by $\mathcal{P}^{m}_{3}$. Let $\mathcal{P}^{m}_{3}=\{P^{m}_{31},P^{m}_{32},\ldots,P^{m}_{3p}\}$, where $p\geq1$. Denote the ends of ears in $\mathcal{P}^{m}_{3}$ by $y$ and~$y'$. Assume, without loss of generality, that $y$ is an internal vertex of $P^{m-1}[w,v]$. If $u$ and $y$ are in different parts of $G$, then $u'$ and $y$ are in the same part. By symmetry, it suffices to consider the case where $u$ and $y$ are in the same part. Adjust $P^{m-1}+\mathcal{P}^{m}_{1}+\mathcal{P}^{m}_{2}+\mathcal{P}^{m}_{3}$ in the ear decomposition of $G$ to $(P^{m-1}[u,y]+P^{m}_{31})+\mathcal{P}^{m}_{3}\setminus \{P^{m}_{31}\}+\mathcal{P}^{m}_{1}+P^{m-1}[y,u']+\mathcal{P}^{m}_{2}$. Then there are fewer than $k$ families of parallel ears on $P^{m-1}[y,u']$ that has one end being an internal vertex of $P^{m-1}[y,u']$ and the other end not on $P^{m-1}[y,u']$. Since the grade of $G$ is $m$, we know that the grade of $P^{m-1}[y,u']$ is at most $m-1$. According to the inductive hypothesis, there is a minimal matching covered bipartite graph $G'$ satisfying $\rho(G')> \rho(G)$, a contradiction. The proof of Claim 1 is complete.                

By Claim 1, the ear decomposition of $G$ is $C+\mathcal{P}^{1}$. If there are two families of parallel ears $\mathcal{P}^{1}_{1}$~and~$\mathcal{P}^{1}_{2}$ with different ends in $\mathcal{P}^{1}$, by Lemma \ref{m12}, $\mathcal{P}^{1}_{1}$ and $\mathcal{P}^{1}_{2}$ are compatible. Adjusting the ear decomposition $C+\mathcal{P}^{1}$, we can see that the grade of $G$ is at least 2, a contradiction. Thus $\mathcal{P}^{1}$ is a family of parallel ears. We denote the ends of the ears in $\mathcal{P}^{1}$ by $u$ and $u'$. Since $G$ is a minimal matching covered bipartite graph, every nice cycle of $G$ has no chords. Hence $u$ and $u'$ are nonadjacent in $G$.   

\indent\textbf{Claim~2.} The length of every cycle in $G$ is 6.

Suppose there exists a cycle $C'$ in $G$ whose length is greater than 6. Let $C'+P_{1}+P_{2}+\cdots+P_{k}$ be an ear decomposition of $G$ starting from $C'$. Since the length of $C'$ is greater than 6, there exists a $(u, u')$-path of length greater than 3 on cycle $C'$, denoted by $P_{uu'}$. Denote the neighbors of $u$ and $u'$ on path $P_{uu'}$ by $v$ and $v'$, respectively, and the neighbors of $u$ and $u'$ on ear $P_{i}$ as $u_{i}$ and $u_{i}'$, respectively, where $1\leq i\leq k$. Except for $u$ and $u'$, denote the neighbors of $v$ and $v'$ on path $P_{uu'}$ by $w$ and $w'$, respectively. By Lemma \ref{m10}, we have $x_{u}=x_{u'}$, $x_{v}=x_{v'}$.

If $x_{v}>x_{u}$, then $x_{v'}>x_{u'}$. Let$$G'=G-\sum\limits_{i=1}\limits^{k}uu_{i}+\sum\limits_{i=1}\limits^{k}vu_{i}-\sum\limits_{i=1}\limits^{k}u'u_{i}'
+\sum\limits_{i=1}\limits^{k}v'u_{i}'.$$One can check that every nice cycle of $G'$ has no chords, thus $G'$ is a minimal matching covered bipartite graph by Lemma \ref{m5}. It follows from Lemma \ref{m11} that $\rho(G')> \rho(G)$, a contradiction to the choice of $G$.               

If $x_{v}\leq x_{u}$, then $x_{v'}\leq x_{u'}$. Let $G'=G-wv+wu-w'v'+w'u'+vv'$. It can be checked that every nice cycle of $G'$ has no chords. Then $G'$ is a minimal matching covered bipartite graph by Lemma \ref{m5}. By Lemmas \ref{m9} and \ref{m11}, we have $\rho(G')> \rho(G)$, a contradiction. Therefore, the length of cycles in $G$ is at most 6. Since $G$ is a minimal matching covered bipartite graph, the length of cycles in $G$ is greater than 4 by Lemma \ref{m4}. Thus the length of every cycle in $G$ is 6. Claim 2 is proved.       

For the ear decomposition $C+\mathcal{P}^{1}$ of $G$, we have $C$ is a 6-cycle and the ears in $\mathcal{P}^{1}$ all have length 3 by Claim~1. It follows that $G\cong P^{*}_{3}$. Hence $\rho(G)=\frac{1+\sqrt{2n-3}}{2}$ by Lemma \ref{mb}. The proof of Theorem 1.1 is complete.     \qed

\section{Minimal factor-critical graphs}
In this section, we utilize the ear decomposition of factor-critical graphs to determine the upper bound for the spectral radius of minimal factor-critical graphs and characterize extremal graphs. First, we introduce some definitions and lemmas, which will lay the foundation for subsequent proofs.
\begin{definition}
If $B$ is a maximal connected subgraph of a graph $G$ and has no cut vertices, then $B$ is called a block of $G$. In particular, $B$ is called an end block of $G$ if $B$ contains only one cut vertex of $G$.
\end{definition}

A graph is said to have an odd-ear decomposition if it can be constructed by starting from an odd cycle and successively adding ears (paths of odd length). Referencing exercises 16.3.11 and 16.3.12 in [3], we can obtain a characterization of factor-critical graphs without cut vertices in terms of odd-ear decompositions.

\begin{lemma}[\cite{ref50}]\label{m19}
A graph is a factor-critical graph without cut vertices if and only if it has an odd-ear decomposition.
\end{lemma}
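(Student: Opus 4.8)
The plan is to prove the stated equivalence in two directions. The implication ``odd-ear decomposition $\Rightarrow$ factor-critical and $2$-connected'' is routine; the converse is Lov\'{a}sz's classical characterization and carries the weight, and I would take its details from the exercises cited in the statement, filling in the ideas as follows.

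For the ``if'' direction, let $G=C_{0}+P_{1}+\cdots+P_{k}$ with $C_{0}$ an odd cycle and every $P_{i}$ an odd ear, and induct on $k$. An odd cycle is $2$-connected and factor-critical, which is the base case. For the step, write $G=G'+P$ with $G'=C_{0}+P_{1}+\cdots+P_{k-1}$ (factor-critical and $2$-connected by induction) and $P$ an odd ear with ends $a,b\in V(G')$ and interior vertices $u_{1},\dots,u_{2t}$; since $P$ has odd length, $2t$ is even, so $|V(G)|$ stays odd. No cut vertex is introduced (Whitney's ear argument: each $u_{i}$ lies on a cycle of $G'+P$). For factor-criticality, given $v\in V(G)$ I exhibit a matching covering $V(G)\setminus\{v\}$: if $v\in V(G')$, take a perfect matching of $G'-v$ together with the ear-edges $u_{1}u_{2},u_{3}u_{4},\dots,u_{2t-1}u_{2t}$; if $v=u_{j}$, cut $P$ at $u_{j}$ and match each half along $P$, using a perfect matching of $G'-a$ when $j$ is even (with ear-edges $au_{1},u_{2}u_{3},\dots,u_{j-2}u_{j-1}$ and $u_{j+1}u_{j+2},\dots,u_{2t-1}u_{2t}$) and of $G'-b$ when $j$ is odd (with ear-edges $u_{1}u_{2},\dots,u_{j-2}u_{j-1}$ and $u_{j+1}u_{j+2},\dots,u_{2t}b$). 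The oddness of $P$ is exactly what makes these parities align.

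For the ``only if'' direction, let $G$ be factor-critical with no cut vertex; then $n=|V(G)|$ is odd, and we may assume $n\ge 3$. First I would produce a nice odd cycle $C_{0}$: fix an edge $v_{0}v_{1}$, let $M_{i}$ be a perfect matching of $G-v_{i}$ for $i=0,1$, observe that $M_{0}\triangle M_{1}$ contains an even $(v_{0},v_{1})$-path $Q$, and set $C_{0}=Q+v_{0}v_{1}$, which is an odd cycle with $M_{0}\setminus E(C_{0})$ a perfect matching of $G-V(C_{0})$. I then maintain, by induction, a nice subgraph $H$ of $G$ carrying an odd ear decomposition that begins with $C_{0}$, together with a perfect matching $N$ of $G-V(H)$, and I extend it while $H\ne G$. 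If $V(H)=V(G)$, every edge of $G$ outside $H$ is a trivial (hence odd) ear, so these can be absorbed one by one. Otherwise I pick $w\in V(G)\setminus V(H)$ adjacent to $V(H)$ and grow an alternating walk out of $V(H)$ that uses a new vertex followed by its $N$-partner, repeatedly; this yields an odd path whose interior is a union of $N$-edges, and $2$-connectedness (through Menger's theorem) guarantees that the walk can be returned to a \emph{second} vertex of $V(H)$, so the ear $P$ obtained has distinct ends and $H+P$ is again nice, certified by $N$ with the used $N$-edges deleted. Iterating exhausts the vertices, and then the chord-absorption step finishes the odd ear decomposition of $G$.

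The main obstacle is precisely this ``close the alternating walk into a genuine path-ear with distinct ends'' step in the converse: one must show the walk never gets stuck, so that the new vertices really do pair up and the two ends on $H$ are distinct, and this is exactly where the no-cut-vertex hypothesis is indispensable (two triangles sharing a vertex is factor-critical but admits no such path-ear). Since this is the content of Lov\'{a}sz's odd-ear-decomposition theorem and of the exercises referenced in \cite{ref50}, I would give the ``if'' argument in full and obtain the converse by combining the nice-odd-cycle construction above with the Whitney--Menger ear-growing procedure from those sources.
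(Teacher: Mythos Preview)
The paper does not prove this lemma; it is quoted from Lov\'{a}sz~\cite{ref50} (with a pointer to Exercises~16.3.11--16.3.12 of~\cite{ref100} in the sentence preceding the statement), so there is no in-paper proof to compare against. Your outline is the standard classical argument: the ``if'' direction you give is complete and correct, and your sketch of the ``only if'' direction (produce a nice odd cycle via $M_{0}\triangle M_{1}$, then iteratively extend by odd ears using $2$-connectivity) is the usual route, with the acknowledged gap---closing the alternating walk into a genuine path-ear with two distinct ends on $H$---being precisely the place where the no-cut-vertex hypothesis and Lov\'{a}sz's original argument do the work. Since the paper treats Lemma~\ref{m19} as a black-box citation, your write-up already goes beyond what the paper itself provides.
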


For factor-critical graphs with cut vertices, the following characterization is provided.

\begin{lemma}[\cite{ref17}]\label{m18}
A graph is factor-critical if and only if it is connected and each of its block is factor-critical.
\end{lemma}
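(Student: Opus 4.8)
The plan is to prove the two implications separately; throughout I use that every factor-critical graph has odd order, since $|V(G)|-1=|V(G-v)|$ must be even.

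For the ``only if'' direction, assume $G$ is factor-critical. I would first note that $G$ is connected: if $G$ had two distinct components $G_1,G_2$, then deleting a vertex of $G_2$ and restricting the resulting perfect matching to $G_1$ forces $|V(G_1)|$ even, whereas deleting a vertex of $G_1$ forces $|V(G_1)|$ odd, a contradiction. Next fix a block $B$ of $G$; if $B=G$ there is nothing to prove, so assume $B$ contains the cut vertices $c_1,\ldots,c_t$ $(t\geq1)$ of $G$ lying on it, and for each $i$ let $G_i$ be the union of $\{c_i\}$ with the components of $G-c_i$ that do not meet $B-c_i$. One checks that $V(B)$ together with the sets $V(G_i)\setminus\{c_i\}$ partitions $V(G)$, that $G_i\cap G_j=\emptyset$ for $i\neq j$, and that $G$ has no edge joining $V(G_i)\setminus\{c_i\}$ to $V(G)\setminus V(G_i)$. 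Since $G-c_i$ has a perfect matching, every component of $G_i-c_i$ has even order, so $|V(G_i)|$ is odd. Now fix $v\in V(B)$ and a perfect matching $M$ of $G-v$. For each $i$ with $c_i\neq v$, if $c_i$ were matched by $M$ to a vertex of $G_i$ then $M\cap E(G_i)$ would be a perfect matching of the odd-order graph $G_i$, which is impossible; hence $c_i$ is matched by $M$ to a vertex of $V(B)\setminus\{v\}$. Since every vertex of $V(B)\setminus\{v\}$ that is not some $c_i$ has all its neighbours in $B$, it follows that $M\cap E(B)$ matches every vertex of $V(B)\setminus\{v\}$ inside $V(B)\setminus\{v\}$, i.e., $M\cap E(B)$ is a perfect matching of $B-v$. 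As $v$ was arbitrary, $B$ is factor-critical.

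For the ``if'' direction, assume $G$ is connected with every block factor-critical, and induct on the number of blocks of $G$; the one-block case is immediate. For the step, choose an end block $B$ of $G$ with cut vertex $c$ and put $G'=G-(V(B)\setminus\{c\})$. Then $G'$ is connected, has fewer blocks than $G$, and each of its blocks is a block of $G$ and hence factor-critical, so $G'$ is factor-critical by the induction hypothesis. Given $v\in V(G)$, let $M_B$ be a perfect matching of $B-v$ if $v\in V(B)$ and of $B-c$ otherwise, and let $M'$ be a perfect matching of $G'-v$ if $v\in V(G')$ and of $G'-c$ otherwise; both exist since $B$ and $G'$ are factor-critical. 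Distinguishing the cases $v\in V(B)\setminus\{c\}$, $v\in V(G')\setminus\{c\}$ and $v=c$, and using $V(B)\cap V(G')=\{c\}$, one verifies that the vertex sets covered by $M_B$ and $M'$ are disjoint and together equal $V(G)\setminus\{v\}$; hence $M_B\cup M'$ is a perfect matching of $G-v$, so $G$ is factor-critical.

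The step I expect to need the most care is the ``only if'' direction: one must correctly set up the branch decomposition $G=B\cup G_1\cup\cdots\cup G_t$, and the key point is the parity observation that, because each $G_i$ has odd order, no perfect matching of $G-v$ can match the cut vertex $c_i$ into the pendant part $G_i$ — it is precisely this that forces the restriction $M\cap E(B)$ to be a perfect matching of $B-v$. The degenerate cases $t=0$ (so $G=B$) and $v\in\{c_1,\ldots,c_t\}$ should be treated separately but are routine once the decomposition is available.
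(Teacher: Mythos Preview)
Your proof is correct. Note, however, that the paper does not supply its own proof of this lemma: it is quoted from Lov\'asz and Plummer's \emph{Matching Theory} (reference~[17] in the paper) and used as a black box, so there is no in-paper argument to compare yours against. What you have written is essentially the standard block--cut-tree proof, and the key parity step you highlight---that each branch $G_i$ has odd order and hence the cut vertex $c_i$ cannot be matched into it---is exactly the point that makes the ``only if'' direction work.
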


There is also a characterization for minimal factor-critical graphs similar to Lemma \ref{m18}. 

\begin{lemma}[\cite{ref17}]\label{m20}
A graph is a minimal factor-critical graph if and only if each of its block is a minimal factor-critical graph.
\end{lemma}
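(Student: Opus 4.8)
The plan is to deduce both implications from the block characterisation of factor-critical graphs, Lemma~\ref{m18}, together with the elementary fact that every edge of a graph lies in exactly one block. The key preliminary observation I would record is the following: if $e\in E(B)$ for a block $B$ of $G$, then the block set of $G-e$ is precisely $(\mathcal{B}(G)\setminus\{B\})\cup\mathcal{B}(B-e)$, where $\mathcal{B}(\cdot)$ denotes the collection of blocks; moreover $G-e$ remains connected. To justify connectedness I would note that $K_2$ is not factor-critical (since $K_2-v=K_1$ has odd order and thus no perfect matching), so any block of a factor-critical graph that actually contains an edge must be $2$-connected, and a $2$-connected graph stays connected after deletion of a single edge. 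The articulation points of $G$ that separate $B$ from the rest of $G$ remain articulation points of $G-e$, while the only new blocks appearing inside $V(B)$ are the blocks of $B-e$; this is a routine consequence of the block--cut-vertex structure.

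For the backward direction, suppose every block of $G$ is minimal factor-critical. In particular every block is factor-critical, and since $G$ is connected (all graphs here are), Lemma~\ref{m18} gives that $G$ is factor-critical. Now fix an arbitrary edge $e$ and let $B$ be the block containing it. By the structural observation the blocks of $G-e$ are the blocks of $G$ other than $B$ together with the blocks of $B-e$, and $G-e$ is connected. Since $B$ is minimal factor-critical, $B-e$ is not factor-critical, so by Lemma~\ref{m18} some block of $B-e$ is not factor-critical; that block is also a block of $G-e$, whence Lemma~\ref{m18} forces $G-e$ to be not factor-critical. As $e$ was arbitrary, $G$ is minimal factor-critical.

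For the forward direction, suppose $G$ is minimal factor-critical. Then $G$ is factor-critical, so by Lemma~\ref{m18} it is connected and each of its blocks is factor-critical; it remains to prove each block $B$ is minimal. Assume for contradiction that $B-e$ is factor-critical for some $e\in E(B)$. Then every block of $G-e$ is factor-critical: the blocks lying outside $B$ are blocks of the factor-critical graph $G$, and those lying inside $V(B)$ are blocks of the factor-critical graph $B-e$, each factor-critical by Lemma~\ref{m18}. Since $G-e$ is connected, Lemma~\ref{m18} yields that $G-e$ is factor-critical, contradicting the minimality of $G$. Hence no such edge exists and every block of $G$ is minimal factor-critical.

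The only genuinely delicate point is the structural lemma describing how the block decomposition transforms under deletion of one edge; everything else is a direct application of Lemma~\ref{m18}. I expect that to be the main obstacle to a fully rigorous write-up, so I would state it carefully (using that a $2$-connected graph minus one edge is connected, and that articulation points of $G$ inside $B$ persist in $G-e$) before invoking it in both directions.
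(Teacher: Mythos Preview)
The paper does not supply its own proof of this lemma; it is quoted without proof from Lov\'asz and Plummer~\cite{ref17}. Your argument is correct and is essentially the standard one: both directions reduce to Lemma~\ref{m18} once one controls how the block decomposition changes under deletion of a single edge, and your observation that no block of a factor-critical graph can be $K_2$ (so every nontrivial block is $2$-connected and stays connected after removing an edge) is precisely what makes the structural claim about $\mathcal{B}(G-e)=(\mathcal{B}(G)\setminus\{B\})\cup\mathcal{B}(B-e)$ go through.
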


\begin{lemma}[\cite{ref17}]\label{m-ear}
In every ear decomposition of a minimal factor-critical graph, all ears are nontrivial.
\end{lemma}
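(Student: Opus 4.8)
The plan is to argue by contradiction. Suppose $G$ is a minimal factor-critical graph that admits an ear decomposition $G=C_{0}+P_{1}+\cdots+P_{k}$, where $C_{0}$ is an odd cycle and every $P_{i}$ is an odd ear, and in which some ear $P_{i}$ is trivial, say $P_{i}=\{e\}$ with $e=uv$. I would then show that $G-e$ is still factor-critical, which contradicts the minimality of $G$.

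The key observation is that a trivial ear contributes no internal vertex, so simply striking $P_{i}$ out of the sequence leaves a legitimate ear decomposition of $G-e$. Write $G_{j}=C_{0}+P_{1}+\cdots+P_{j}$. The endpoints $u,v$ of $e$ already lie in $G_{i-1}$, so for every later ear $P_{j}$ $(j>i)$ both of its endpoints remain present in $C_{0}+P_{1}+\cdots+P_{i-1}+P_{i+1}+\cdots+P_{j-1}$, while its internal vertices, being new, are still absent from it; and since the ears of a decomposition are pairwise edge-disjoint and edge-disjoint from $C_{0}$, none of the remaining terms uses the edge $e$. Hence
$$G-e=C_{0}+P_{1}+\cdots+P_{i-1}+P_{i+1}+\cdots+P_{k}$$
is an ear decomposition of $G-e$, and it is an odd ear decomposition because $C_{0}$ is still an odd cycle and each surviving $P_{j}$ is still an odd path. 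From here I would deduce that $G-e$ is factor-critical: each term of this decomposition lies inside a single block of $G-e$, the terms lying in a fixed block of $G-e$ form an odd ear decomposition of that block, so each block of $G-e$ is factor-critical by Lemma \ref{m19}, and therefore $G-e$ is factor-critical by Lemma \ref{m18}. This contradicts the assumption that $G$ is minimal factor-critical, so no ear of any ear decomposition of a minimal factor-critical graph can be trivial.

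I expect the main obstacle to be the bookkeeping needed to certify that the displayed sequence really is an ear decomposition of $G-e$ and that this descends to the block level, since one cannot invoke Lemma \ref{m19} for $G$ itself: minimal factor-critical graphs need not be $2$-connected (the extremal graph $K_{1}\vee\frac{n-1}{2}K_{2}$ already has a cut vertex), so the block reduction supplied by Lemma \ref{m18} is precisely what bridges the gap. A related point worth keeping in mind is that in the non-$2$-connected case an ear decomposition may have to use closed ears (odd cycles meeting the graph built so far in a single vertex), but such an ear in a simple graph necessarily has length at least $3$ and is automatically nontrivial; hence the trivial ear that must be excluded can only be an open ear, which is exactly the situation handled above.
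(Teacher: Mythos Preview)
The paper does not supply its own proof of this lemma; it is quoted from Lov\'asz--Plummer~\cite{ref17} without argument. Your approach---strike the trivial ear $P_i=\{e\}$ from the sequence, observe that the remaining terms still form an odd ear decomposition of $G-e$, and conclude that $G-e$ is factor-critical, contradicting minimality---is exactly the standard proof and is correct.

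The excursion through blocks, however, is unnecessary and its justification is incomplete. In the paper's framework an odd ear decomposition uses open ears, and by Lemma~\ref{m19} such a decomposition exists precisely for factor-critical graphs \emph{without} cut vertices; this is also the only setting in which the paper invokes Lemma~\ref{m-ear} (namely, for a block $B$ in the proof of Theorem~1.2). Once you have shown that $G-e$ inherits an odd open-ear decomposition, Lemma~\ref{m19} applied directly to $G-e$ already gives that $G-e$ is factor-critical; Lemma~\ref{m18} and the block bookkeeping are superfluous. If you want to treat graphs with cut vertices and closed ears as well, the clean route is to invoke the full Lov\'asz characterisation (factor-critical $\Longleftrightarrow$ odd ear decomposition, closed ears allowed) from~\cite{ref17} on $G-e$, rather than asserting that ``the terms lying in a fixed block of $G-e$ form an odd ear decomposition of that block'': that restriction statement is believable but is not obvious and you do not prove it.
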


Now we are in a position to present the proof of Theorem 1.2. 
\vspace{0.2cm}

\noindent\textbf{Proof of Theorem 1.2.} Let $G$ be a minimal factor-critical graph of order $n$ with the greatest spectral radius. Let \textbf{x} be the Perron vector of $G$. We start by proving two claims.

\indent\noindent\textbf{Claim 1.} Every block of $G$ is an odd cycle.

Suppose that a block $B$ of $G$ is not an odd cycle. By Lemmas \ref{m19} and \ref{m-ear}, $B$ has an odd-ear decomposition~$P_{0}+P_{1}+\cdots+P_{r}$, where $P_{0}$ is an odd cycle, and for each $i$, $0\leq i\leq r-1$, $P_{i+1}$ is an ear of $P_{0}+\cdots+P_{i}$ of length at least 3. Let the two ends of $P_{r}$ be $u$ and $v$, and the neighbors of $u$ and $v$ on $P_{r}$ be $u'$ and $v'$, respectively. Assume, without loss of generality, that $x_{u}\geq x_{v}$. Let $B'=B-vv'+uv'$. Then $P_{0}+P_{1}+\cdots+P_{r-1}+(P_{r}[u,v']+uv')$ is an ear decomposition of $B'$. Write $B''=P_{0}+P_{1}+\cdots+P_{r-1}$. Clearly, $B''$ and the odd cycle $P_{r}[u,v']+uv'$ are two blocks of $B'$. By Lemma \ref{m20}, both $B'$ and $B''$ are minimal factor-critical graphs. Note that $G'=G-vv'+uv'$. Since the odd cycle $P_{r}[u,v']+uv'$ is a minimal factor-critical graph, by Lemma \ref{m20}, $G'$ is a minimal factor-critical graph. By Lemma \ref{m11}, $\rho(G')>\rho(G)$, which contradicts the choice of $G$. Claim 1 is proved. 

\indent\noindent\textbf{Claim 2.} Every block of $G$ is $C_{3}$.                         

Let $C$ be a block of $G$. By Claim 1, $C$ is an odd cycle. Assume that the length of $C$ is $k$ and $k\geq 5$. Label the vertices on the cycle $C$ as $v_{1}, v_{2}, \ldots, v_{k}$ in order. Similar to the analysis in the proof of Theorem 1.1, we know $G$ is not a cycle, then there are cut vertices in $G$. Let $v_1$ be a cut vertex of $G$. Compare the values of $x_{v_{2}}$ and $x_{v_{4}}$. If $x_{v_{2}}<x_{v_{4}}$, let $G'=G-v_{1}v_{2}+v_{1}v_{4}+v_{2}v_{4}$. If $x_{v_{2}}\geq x_{v_{4}}$, let $G'=G-v_{4}v_{5}+v_{5}v_{2}+v_{4}v_{2}$. Note that every block of $G'$ is an odd cycle, which is factor-critical. By Lemma \ref{m18}, $G'$ is a factor-critical graph. Since odd cycles are minimal factor-critical graphs, by Lemma \ref{m20}, $G'$ is a minimal factor-critical graph. By Lemmas \ref{m11} and \ref{m9}, $\rho(G')>\rho(G)$, a contradiction to the choice of $G$. Claim 2 is proved.         

From Claim 2, we know that every block of $G$ is $C_3$. If $G$ has only one cut vertex, then the conclusion holds. Suppose that $G$ has at least two cut vertices. Let $B_1$ and $B_2$ be two end blocks of $G$, and $w_1$ and $w_2$ be the cut vertices of $G$ in $B_1$ and $B_2$, respectively. Assume that $w_1$ and $w_2$ is a pair of cut vertices with the maximum distance in $G$. Assume, without loss of generality, that $x_{w_2}\geq x_{w_1}$. Let $w$ and $w'$ be the neighbors of $w_{1}$ in $B_1$. Let $G'=G-w_1w-w_1w'+w_2w+w_2w'$. By Lemma \ref{m20}, $G'$ is a minimal factor-critical graph. By Lemma \ref{m11}, $\rho(G')>\rho(G)$, a contradiction. Therefore, $G$ has only one cut vertex, and so $G\cong K_{1}\vee\frac{n-1}{2}K_{2}$. Theorem 1.2 is proved.\qed

\section*{Acknowledgement}

The authors are supported by National Natural Science Foundation of China (12571381, 12201574, 12371361) and Natural Science Foundation of Henan Province (252300420303).

\end{document}